\numberwithin{equation}{section}
\definecolor{brightcerulean}{rgb}{0.11, 0.67, 0.84}
\definecolor{cerulean}{rgb}{0.0, 0.48, 0.65}
\definecolor{Gray}{rgb}{0.5, 0.5, 0.5}
\newcommand*{\BF}[1]{\ifmmode\bm{#1}\else\textbf{#1}\fi}
\newcommand\bx{{\mathbf x}}
\newcommand\by{{\mathbf y}}
\newcommand\bq{{\mathbf q}}
\newcommand\EE{{\mathbb E}}
\newcommand\PP{{\mathbb P}}
\newcommand\RR{{\mathbb R}}
\newcommand\R{{\mathbb R}}
\newtheorem{Theorem}{Theorem}
\newtheorem{Definition}{Definition}
\newtheorem{Remark}{Remark}
\newcommand{\mc}[1]{{\mathcal #1}}
\newcommand{\bb}[1]{{\mathbb #1}}
\newcommand{\ve}{\varepsilon}
\begin{document}

\title[]{Gamma Convergence approach for the large deviations of the density in systems of interacting diffusion processes }

\author{J. Barr\'e}

\address{J. Barr\'e, Institut Denis Poisson, Universit\'e d'Orl\'eans, CNRS et Universit\'e de Tours, et Institut Universitaire de France}
\email{{\tt julien.barre@univ-orleans.fr}}

\author{C.Bernardin}
\address{C. Bernardin, Universit\'e C\^ote d'Azur, CNRS, LJAD\\
Parc Valrose\\
06108 NICE Cedex 02, France}
\email{{\tt cbernard@unice.fr}}

\author{R. Ch\'etrite}

\address{R. C\'etrite.Universit\'e C\^ote d'Azur, CNRS, LJAD\\
Parc Valrose\\
06108 NICE Cedex 02, France}
\email{{\tt raphael.chetrite@unice.fr}}

\author{Y. Chopra}

\address{Y. Chopra, Universit\'e C\^ote d'Azur, CNRS, LJAD\\
Parc Valrose\\
06108 NICE Cedex 02, France}
\email{{\tt yash.chopra@unice.fr}}

\author{M. Mariani}
\address{M. Mariani, Faculty of Mathematics\\
National Research University Higher School of Economics \\
6 Usacheva St., 119048 Moscow, Russia.}
\email{{\tt mmariani@hse.ru}}

\date{\today.}
\begin{abstract} We consider extended slow-fast systems of $N$ interacting diffusions. The typical behavior of the empirical density is described by a nonlinear McKean-Vlasov equation depending on $\varepsilon$, the scaling parameter separating the time scale of the slow variable from the time scale of the fast variable. Its atypical behavior is encapsulated in a large $N$ Large Deviation Principle (LDP) with a rate functional ${\mathcal I}^\ve$. We study the $\Gamma$-convergence of $\mathcal I^\ve$ as $\ve \to 0$ and show it converges to the rate functional appearing in the Macroscopic Fluctuations Theory (MFT) for diffusive systems. 

\end{abstract}

\keywords{Active particles, Large Deviations, $\Gamma$-convergence, Scaling limits, Macroscopic Fluctuation Theory, Dean equation, McKean-Vlasov equation.} 


\thanks{ }
\newtheorem{theorem}{Theorem}[section]
\newtheorem{proposition}[theorem]{Proposition}
\newtheorem{lemma}[theorem]{Lemma}
\newtheorem{corollary}[theorem]{Corollary}
\newtheorem{assumption}[theorem]{Assumption}
\theoremstyle{remark}
\newtheorem{remark}[theorem]{Remark}
\newtheorem{definition}[theorem]{Definition}
\makeatletter
\newcommand{\cqfd}{{\unskip\kern 6pt\penalty 500
\raise -2pt\hbox{\vrule\vbox to 6pt{\hrule width 6pt
\vfill\hrule}\vrule}\par}}

\maketitle
\tableofcontents
\section{Introduction}
\label{intro}

We consider a system of $N\ge 1$ interacting particles (e.g. economical agents, living or artificial entities ..). The configuration of a particle labeled by $i$  is described by two coordinates: a first one (called position for convenience) $q_i  \in \RR^n$ and a second one (called internal degree of freedom) $\theta_i$ living in some $m$-dimensional Riemannian manifold $(\mc M, {\mathfrak g})$ whose Riemannian measure is denoted by $\mu_{\mathfrak g}$. The gradient {\footnote{In local coordinates, with Einstein's convention, for any smooth function $f$ and any vector field $X:=X^k \partial_{\theta_k}$, $\nabla_\theta f= {\mathfrak g}^{k \ell} \partial_{\theta_k} f \partial_{\theta_\ell}$ and $\nabla_\theta \cdot [X^k \partial_{\theta_k}] =\tfrac{1}{\sqrt G} \partial_{\theta_k} (X^k \sqrt{G})$ where $G={\rm{det}}({\mathfrak g}^{k\ell})$. We have also then the integration by parts formula: $\int d\mu_{\mathfrak  g} \, X (\nabla_\theta f) = - \int d\mu_{\mathfrak g}\, (\nabla_\theta \cdot X) \; f$.  }} on ${\mathcal M}$ is denoted by $\nabla_\theta$ and the divergence by $\nabla_\theta \cdot$ . The equations of motion are given by Fisk-Stratonovich stochastic differential equations (SDE's):\\
\begin{equation}
\label{eq:general-model-manifold0}
\begin{cases}
& dq_i = \varepsilon \, V (\theta_i) dt,\\
& {d\theta}_i =\Big[ B  - \tfrac{1}{\mathcal{N}_i}\sum_{j\in \mathcal{V}_i} F (\cdot , \theta_j) \Big]\,  (\theta_i)  \, dt \;  + \; \sqrt{2} \, \sum_{a=1}^\ell A_a (\theta_i) \, \circ dW_i^a (t).
\end{cases}
\end{equation}
Here, $V:=V(\theta)$ is a vector field on $\R^n$; $B, A_1, \ldots, A_\ell$ are $\ell +1 $ vector fields on ${\mc M}$ ($\ell$ is arbitrary); and for each $\theta' \in {\mc M}$,  $F:=F(\cdot, \theta')$ is a vector field on ${\mc M}$ deriving from a  potential $W(\cdot, \theta')$: 
$$F(\theta, \theta')=\nabla_\theta W (\theta, \theta').$$
All these fields are assumed to be smooth. The set $\mathcal{V}_i$ is the set of labels of particles interacting with particle $i$ in a neighborhood of radius $R>0$:
$$\mc V_i :=\{ j\in \{1, \ldots, N\} \; ;\; \mid q_i - q_j \mid \; \le\;  R\}$$
and $\mathcal{N}_i$ is the number of particles in $\mathcal{V}_i$. The $W_i:=(W_i^1, \ldots, W_i^m)$'s are independent standard $m$-dimensional Wiener processes simulating the interaction with some external  environment.

This class of models includes several types of active matter models (see for instance \cite{Peruani08,Baskaran08,Degond08,Cattiaux,Degond19}) born after the seminal work of Vicsek et al.\cite{Vicsek}; in these models ${\mathcal M}$ is often $\mathbb{S}^1$, but may be $\mathbb{S}^2$ or $SO_3$. Note however that \eqref{eq:general-model-manifold0} is sufficiently general to have applications in other fields (for example as simplified Lagrangian stochastic model \cite{Bossy}). A particular case of interest in active matter \cite{Peruani08} is the two dimensional ($n=2$) model with ${\mc M}=\{e^{i \alpha}\; ; \; \alpha \in [-\pi, \pi)\}$ the unit circle equipped with the trivial metric and
\begin{equation*}
\label{eq:pot-pc}
\begin{split}
& V (e^{i\alpha}) =e^{i\alpha} \in \R^2, \quad W (e^{i\alpha}, e^{i \alpha'}) =\cos (\alpha -\alpha'), \\
& A_1 (e^{i\alpha}) =1,  \quad \ell=1.
\end{split}
\end{equation*}
A natural multidimensional generalization of this model follows by the choice ${\mathcal M}={\mathbb S}^m$ the $m$-dimensional sphere equipped with its natural metric and 
\[
W (\theta, \theta') :=- \, \theta \cdot \theta', \quad V(\theta) :=\theta,
\]
where $B, A_a$ are arbitrary and $\cdot$ denotes the usual scalar product in $\R^{m+1}$. Hence here the velocity  $\theta_i$ of the particle $i$ has a constant norm by hypothesis. \\

In this work we will consider large systems, i.e.  $N\to \infty$, as $\ve \to 0$, i.e. assuming that the $q_i$'s dynamics is much slower than the $\theta_i$'s one. Hence our model belongs to the class of \textit{infinite dimensional} slow-fast systems. \\

A huge amount of work has been devoted to the study of {\it{finite}} {\it{dimensional}} (random or deterministic) slow-fast dynamical systems, of which \eqref{eq:general-model-manifold} is only a particular subclass. Hence $N$ is fixed and $\ve \to 0$, i.e. $N\ve \to 0$. For these finite-dimensional models, one is interested in the  characterization of the dynamics of the slow variables $q(t) \in (\R^{n})^N$  as $\ve \to 0$. Its typical behavior, in the time scale $\ve^{-1}$, is studied by tools of homogeneization theory \cite{BenS, Papa, Bakhvalov, Jikov, Cioranescu, Olla,  Landim, Pavliotis}. Since the initial system is random, fluctuations of $q (\ve^{-1} t)$ around its typical behavior $\bar q (t)$ are also of interest and can be studied theoretically. In particular LDP  {\footnote{See \cite{Varadhan,Varadhan1,Varadhan2, FW, Ellis, Deuschel, DemboZ,DenH,Touchette} for a general introduction about LDP.}} exist in the form \cite{FW,Veretennikov, Lipster, Bouchet0}
\begin{equation}
\label{eq:LDP1-intro}
\PP \left( q( \ve^{-1}t ) \approx Q(t) \text{ on } [0,T] \right) \sim \exp \left(-\ve^{-1} {\mathcal J}^N_T (Q) \right)
\end{equation}
where ${\mathcal J}^N_T$ is an explicit rate functional vanishing for $Q={\bar q}$.\\

On the other hand, for fixed $\varepsilon$, one can be interested in the description of the dynamics (in $q$ and $\theta$) as $N\to \infty$, i.e. $N\ve \to \infty$, through the study of the empirical density $f_N^{\ve} (q, \theta, t)$. The dynamics becomes thus {\textit{infinite-dimensional}} and the typical behavior of $f_N^\ve (q, \theta,t)$ is described by $f^\ve (q, \theta,t)$ which is solution of a (kind of) McKean-Vlasov equation \cite{McKean, McKean2, Desai, Kipnis,  Dawson0, Funaki, Oel,  Bonilla, Gartner, Sznitman, Meleard, Carillo-Delgadino-Pavliotis}. Fluctuations (central limit theorems or large deviations principles) around this typical behavior have been investigated previously \cite{Tanaka,Dawson3,Kipnis2, Dawson5, Dawson4,Dai Pra, Fontbona,  Budhiraja, Fischer,Bar1,Bouchet, Muller, Reygner, Dos,Coppini}. More explicitly a large deviations principle for $f_N^\ve$ holds{\footnote{Sometimes it is also necessary to perform first a change of frame, see \eqref{eq:changetsfeps}}}:
\begin{equation}
\label{eq:LDP2-intro}
\PP \left( f_N^\ve (q, \theta, t\ve^{-2}) \approx g(q, \theta, t) \text{ on } [0,T]  \right) \sim \exp ( -N {\mathcal I}_T^\ve (g) ) 
\end{equation}
where the rate functional $\mc I_T^\ve$ is of course vanishing if $g(\cdot, t) \equiv f^\ve (\cdot, t \ve^{-2})$ on the time interval $[0,T]$.\\

In this paper we are interested in the behavior of the large deviations functional ${\mathcal I}_T^\ve$ for the empirical density {\footnote{While the interaction is mean field we will send $R\to 0$ after $N\to \infty$ so that the binary interaction will become local in space, but this is not a fundamental  aspect of our work, even if the results would have to be modified.}}  when $\ve \to 0$. From a technical point of view the study of this convergence of functionals has to be accomplished in the $\Gamma$-convergence framework \cite{Braides,DelMaso}. Roughly speaking we show, under a certain number of assumptions on the model, that ${\mathcal I}^\ve_T$ converges as $\ve \to 0$ to a functional ${\mathcal I}_T$ whose finite values are supported on density functions $g$ which have a local equilibrium form: $g(q, \theta, t)=\rho (q, t) G(\theta)$ where $G(\theta)$ is the unique stationary measure --  in the fast dynamics variables $\theta$ -- of the McKean-Vlasov equation (i.e. when $\ve =0$), while $\rho (q,t)$ is arbitrary and describes the potential time dependent density profiles (in $q$) available by the slow dynamics of the $q_i$'s. Hence, in some sense, we establish some averaging (or homogeneization) principle at the level of large deviations. The limiting large deviations functional ${\mc I}_T$ takes a form similar to the one appearing in the context of the Macroscopic Fluctuations Theory  \cite{Bertini0, Bertini} for diffusive systems, and is fully explicit. In particular, the functional ${\mc I}_T$ vanishes when $g(q, \theta, t) =\rho (q, t) G(\theta)$ where $\rho$ is the solution of a linear diffusion equation which can also be guessed by a Chapman-Enskog expansion \cite{Chapman} of the solution $f^\ve$  of the McKean-Vlasov equation mentioned above. Our limiting large deviation functional ${\mathcal I}_T$ is also consistent with a Chapman-Enskog analysis of the so-called ``Dean equation" (fluctuating McKean-Vlasov equation at finite $N$). We point out that the active matter systems, which are one of the motivations of this work, usually feature a moderately large number of individual units (typically much smaller than for a standard fluid for instance); a precise description of the finite $N$ fluctuations, as provided here at the large deviation level, may then be particularly important. 
The main limitation of our work is the crucial assumption that the equilibrium state $G$ is unique while in many cases of interest (and in particular in active matter models) it is not true. A very interesting question is therefore to know how to extend our results in these cases.

\subsection{Plan}
The paper is organized as follows. In Section \ref{sec:kl} we present the model and describe its kinetic limit, as well as its approximated hydrodynamics when the spatial dynamics is much slower than the angular dynamics, by relating it to the classical Chapman-Enskog approach. We then introduce the finite size fluctuations kinetic equation that we reinterpret in the large deviation (LD) theory framework. Our first main result is then stated in Section \ref{sec:mainresult} and establishes a LD principle with an explicit rate function for the density of particles in the limit where the spatial dynamics is much slower than the angular dynamics. Since the limit involves convergence of rate functionals we have to use the appropriate notion of $\Gamma$-convergence. The proof of this result is given in Section \ref{sec:proofgamma}. The paper is concluded by several appendices.

\section{From the microscopic model to a fluctuating hydrodynamic equation}
\label{sec:kl}

\subsection{Microscopic models}

While our main result (Theorem \ref{thm:main}) could probably be extended for the model given by \eqref{eq:general-model-manifold0} under some assumptions on the vector fields $V$, $B$, $F$ and $A_a$'s, we choose for technical reasons (in particular ones leading to Appendix \ref{sec:app-le}  and Appendix \ref{sec:linearizedoperator} where our `dissipative assumption' \eqref{eq:vraiehypothese} can be checked) to focus only on `$A_a$'s-gradient dynamics ', i.e.  
\begin{equation}
\label{eq:general-model-manifold}
\begin{cases}
& dq_i = \varepsilon \, V (\theta_i) dt,\\
& {d\theta}_i = - \sum_{a=1}^m \left[ {\mathfrak g}  \Big( A_a\, , \, \nabla_\theta {U}+ \tfrac{1}{\mathcal{N}_i}\sum_{j\in \mathcal{V}_i} F (\cdot , \theta_j) \Big)  \; A_a \right]\,  (\theta_i)  \, dt \\
&\quad  \quad \; + \; \sum_{a=1}^m \Big[ (\nabla_\theta \cdot  A_a)\,  A_a \Big]\,  (\theta_i) \, dt \;  + \; \sqrt{2} \, \sum_{a=1}^m A_a (\theta_i) \, \circ dW_i^a (t), 
\end{cases}
\end{equation}
where we recall that ${\mathfrak g}$ is the Riemannian metric on ${\mc M}$.  We also assume that $\mc M$ is compact. The presence of the spurious drift term $\sum_{a=1}^m [\nabla_\theta \cdot  A_a] \, A_a $ is here to ensure that the dynamics of the $\theta_i$'s is reversible {\footnote{This reversibility means that if $L$ is the Markovian generator with $q$
frozen acting on function $f$ on $\mathcal{M}$ as 
\[
L(f)=\sum_{a=1}^m  e^{\mathcal U} \nabla_{\theta}.\left(
e^{-\mathcal{U}} \, {\mathfrak g} \Big(\nabla_{\theta} f , A_{a} \Big) \, A_a\right), 
\]
then for any function $f,h$ on $\mathcal{M}$ the integral $\int_{\mathcal{M}}d \mu_{\mathfrak g}\; e^{-\mathcal{U}} \, f\, Lh$
is symmetric in $f,h$.}} 
with respect to the Gibbs measure  $e^{-\mathcal{U}}$, $
\mathcal{U}\left(\theta\right)\equiv\sum_{i}\left(U\left(\theta \right)+\frac{1}{2\mathcal{N}_{i}}\sum_{j\in\mathcal{V}_{i}}W\left(\theta ,\theta_{j}\right)\right)
$
when $R=\infty$, and the potential $W$ is symmetric, i.e. $W(\theta, \theta')=W(\theta', \theta)$. The interaction is thus regulated by $A(\theta):= (A_1 (\theta), \ldots, A_m (\theta))$ that we assume to satisfy: for any smooth function $f(\theta)$ on ${\mathcal M}$,
\begin{equation*}
\sum_{a=1}^m \int_{\mathcal M} d\mu_{\mathfrak g} (\theta) \, (A_a f)^2 (\theta) =0 \quad \text{implies} \quad f\equiv 0.
\end{equation*}
This condition is here to ensure a non-degenerate diffusivity in the $\theta$ variable. We also assume a non-degeneracy condition for $V$:
\begin{equation}
\label{eq:nondegenerateV}
{\rm{Span}} \left\{ \nabla_\theta V (\theta)\; ;\; \theta \in {\mc M} \right\} =\R^n.\\
\end{equation}
\vspace{0.5cm}

For the convenience of the reader we will write explicitly the proof for $\mc M:=(-\pi, \pi]$ the unit torus equipped with the trivial metric but we will state all our results in the general case presented above. The interested reader will check easily that our proofs can be extended mutatis mutandis to the models described by \eqref{eq:general-model-manifold}. In this simpler case, the equations of motion \eqref{eq:general-model-manifold} are thus given by the  Fisk-Stratonovich SDE's (with $m=1$ and by defining $A_{1} (\theta)= \sqrt{\Gamma (\theta)} \nabla_\theta$) which can be translated as the Ito SDE's:\\
\begin{equation}
\label{eq: motion}
\begin{split}
&dq_i  = \varepsilon \, V (\theta_i) dt,\\
&{d\theta}_i = - [\Gamma \partial_\theta {\bb U}] (\theta_i)dt - \frac{1}{\mathcal{N}_i}\sum_{j\in \mathcal{V}_i} \Gamma (\theta_i) F (\theta_i, \theta_j)dt  +\sqrt{2 \Gamma(\theta_i)} \, dW_i(t)
\end{split}
\end{equation}
with the effective potential
\begin{equation*}
{\bb U}(\theta):=U(\theta) - \log \Gamma(\theta).
\end{equation*}
Since $\mc M:=(-\pi, \pi]$ is the unit torus all these fields can be seen as $2\pi$-periodic functions in the internal degree of freedom variable.

\subsection{Thermodynamic limit}
\subsubsection{Kinetic equation}
\label{susubsecKE}

Let us first fix $\ve>0$. In the thermodynamic limit $N\to \infty$ and then local spatial limit $R\to 0$, at the kinetic level, the time dependent density $f^\ve(q,\theta,t)$ of the system is described by a kinetic equation (see Appendix \ref{app:kin} for a formal derivation and \cite{Bossy} for a rigorous derivation in a similar context) which is a kind of Mc-Kean-Vlasov equation. More exactly it is an integro (in $\theta$)-differential (in $q-\theta$) non-linear Fokker-Planck equation  \cite{McKean, McKean2, Desai, Kipnis,  Dawson0, Funaki, Oel,  Bonilla, Gartner, Sznitman, Meleard, Carillo-Delgadino-Pavliotis}:
\begin{equation}
\label{eq:det-fluct-eq}
\begin{split}
\partial_t f^\ve &=\partial_\theta \left( \Gamma \left[ \partial_\theta U  +\cfrac{ F (f^\ve) }{\rho^\ve}\right]   f^\ve  +  \Gamma \, \partial_\theta f^\ve \right) -\; \varepsilon V \cdot \nabla  f^\ve \\
&:= {\mc D}_{f^\ve} (f^\ve) - \ve \mc T (f^\ve)
\end{split}
\end{equation}
with $F(f)$ meaning 
\begin{equation*}
F(f) (q, \theta):= \int_{-\pi}^\pi d\theta' \, F (\theta, \theta') \, f (q, \theta') \, d\theta', 
\end{equation*}
and 
\begin{equation*}
\rho^\ve (q):=\Pi (f^\ve) (q):= \int_{-\pi}^{\pi} f^\ve (q, \theta')  d\theta'.
\end{equation*}
Here the linear dissipative operator ${\mc D}_f$ and the linear transport operator $\mc T$ are defined for all function $g$ by 
\begin{align}
\label{eq:defDT}
&{\mc D}_f (g) := \partial_\theta \left(\Gamma  \left[ \partial_\theta U +\,  \cfrac{ F (g) }{\Pi (g) }\right]  g  +  \Gamma \, \partial_\theta g \right) ,\\
&\mc T (g) := V \cdot \nabla g.
\end{align}

\subsubsection{Local equilibiria}
\label{subsubsec:localEquilibriA}
The fast dynamics ($\ve=0$) is given by
\begin{equation}
\partial_t f =\mathcal{D}_f (f).
\label{eq:fastdyn}
\end{equation}
The time asymptotic stationary solutions $f_{\rm{le}}$ of  \eqref{eq:fastdyn} are called local equilibria. These local equilibria are studied in Appendix \ref{sec:app-le} where it is shown that they take the form $f_{\rm{le}} (q, \theta) = \rho (q) G_{\rho (q)} (\theta)$ where 
$$\rho (q) := \int_{-\pi}^\pi d\theta f_{\rm{le}} (q, \theta)$$
and $G:=G_\rho$ is a solution of
\begin{equation}
\label{eq:fluxnul}
[\partial_\theta U +  F(G)]\,  G + \partial_\theta G =0.
\end{equation}
with the condition 
\begin{equation*}
\int_{-\pi}^\pi d\theta G(\theta) =1.
\end{equation*}
In the sequel we restrict our study to the case where we have only one solution to this equation that we denote by $G$. Then all local equilibrium $f_{\rm{le}}$ is in the form 
\begin{equation}
f_{\rm{le}}(q,\theta) = \rho(q) \; G(\theta)
\label{eq:le-generic}
\end{equation}
where $G>0$ is unique and fixed and $\rho\ge 0$ is arbitrary. For generic potentials $U$ and $W$, it is difficult to precise exactly under which conditions this occurs. However, as shown in Appendix  \ref{sec:app-le}, if the interaction potential $W$ is sufficiently small, this is the case.  A detailed study of the the set of local equilibria for related McKean-Vlasov models can be found for example in \cite{Desai,  Dawson0, Bonilla,Chayes,Tug14, Carillo2,Degond13}.\\

In the following, the expectation of $f$ with respect to $G$ is written $\langle f \rangle_G$ and the corresponding scalar product between functions $f$ and $g$ by $\langle f , g \rangle_G =\int_{-\pi}^\pi f g  G(\theta) d \theta$.\\

\subsubsection{The hydrodynamic limit via Chapman-Enskog expansion: Transport equation and Diffusion equation}

We now send $\ve$ to $0$ and look at the density in the long time scale $t\ve^{-1}$:
\begin{equation}
\label{eq:timescaley}
{\tilde f}^\ve (q, \theta, t) = {f}^\ve (q, \theta, t \ve^{-1}). 
\end{equation}
Consider the particle density
$$ \tilde \rho^\ve_0 (q,t) =  \int_{-\pi}^{\pi} {\tilde f}^\ve (q, \theta, t)  d\theta.$$
When $\ve \to 0$, we have that $(\tilde \rho^\ve_0)_\ve$ converges to $\tilde \rho_0$ solution 
\begin{equation}
\label{eq:rho_0-equation02}
\partial_t \tilde \rho_0 + \langle V \rangle_G \cdot  \nabla_q \rho_0 =0.
\end{equation}
We can push forward the expansion and a fairly standard Chapman-Enskog expansion \cite{Chapman, ELM, LSR} (see Appendix \ref{app:chap}) gives the following approximated diffusion equation for the density:  
\begin{equation}
\label{eq:rho_0-equation2}
\partial_t \tilde \rho^\ve_0 + \langle V \rangle_G \cdot  \nabla \tilde \rho_0^\ve  - \ve \nabla\cdot  {\bf D} \, \nabla   \, \tilde \rho_0^\ve \; = O (\ve^2)
\end{equation}
where the symmetric matrix $\bf D$ of size $n$ is given by \eqref{eq:defD}.


\subsection{Finite size fluctuations and large deviations around the kinetic equation}
\subsubsection{Fluctuating kinetic equation}
When finite $N$ fluctuations are taken into account, beyond the `law of large number'~\eqref{eq:det-fluct-eq}, we obtain in the time scale $t\ve^{-1}$ (like in \eqref{eq:timescaley}) the very formal weak noise SPDE:
\begin{equation}
\begin{split}
\label{eq:f20}
\partial_t {\tilde f}^\ve &=\ve^{-1} \partial_\theta \left( \Gamma \left[ \partial_\theta U  + \cfrac{F( {\tilde f}^\ve) }{{\tilde \rho}^\ve}\right]   {\tilde f}^\ve  +  \Gamma \, \partial_\theta {\tilde f}^\ve \right) - V \cdot \nabla {\tilde f}^\ve \\
&+\sqrt{\frac{2}{N \ve} }\, \partial_\theta \Big(\sqrt{\Gamma{\tilde f}^\ve} \; \eta \Big).
\end{split}
\end{equation}
Here $\eta:=\eta (q, \theta,t)$ is a standard Gaussian noise $\delta$-correlated in $q$ and $\theta$, i.e. white in these variables. We rewrite the fluctuating kinetic equation as 
\begin{equation}
\label{eq:fluct_kin_start}
\partial_t {\tilde f}^\ve + \mc T ({\tilde f}^\ve) =\ve^{-1} \mc D_{{\tilde f}^\ve}({\tilde f}^\ve) +(\ve N)^{-1/2} \mc N\left(\sqrt{\Gamma {\tilde f}^\ve}\right)
\end{equation}
where $\mc N(g):=\sqrt{2}\, \partial_{\theta} (\eta \, g)$ is the noise operator. Recall \eqref{eq:rho_0-equation02} and \eqref{eq:rho_0-equation2}. It is then natural to look at the fluctuating kinetic equation at diffusive time scale in the frame defined by the transport equation \eqref{eq:rho_0-equation02}:
\begin{equation}
\label{eq:changetsfeps}
\begin{split}
{\bar f}^\ve (q, \theta,t) &:= \tilde f^\ve (q + \, t\ve^{-1} \langle V \rangle_G, \theta, t \ve^{-1}).
\end{split}
\end{equation}
which is solution of
\begin{equation}
\label{eq:fluct_kin_start045}
\begin{split}
&\ve \partial_t {\bar f}^\ve + {{\mc T}_0} ({\bar f}^\ve) =\ve^{-1} {\mc D}_{{\bar f}^\ve} ({\bar f}^\ve) + N^{-1/2} \mc N \Big(\sqrt{\Gamma {\bar f}^\ve}\Big),\\
\end{split}
\end{equation}
where the centered transport operator is defined for any function $g$ by
\begin{equation}
\label{eq:T000}
\begin{split}
{\mc T}_0 (g) := {\overline V} \cdot \nabla g\\
\end{split}
\end{equation}
with the vector field $\overline V$ defined by
\begin{equation}
\label{eq:u-def}
{\overline V} (\theta) = V(\theta) - \langle V \rangle_G.
\end{equation}

Equation \eqref{eq:f20}, \eqref{eq:fluct_kin_start}, \eqref{eq:fluct_kin_start045} are sometimes called ``Dean equation" \cite{Dean96} {\footnote{But it appeared previously in \cite{Dawson5} (see equation (0.8)).}}. For a formal derivation, see Appendix \ref{app:kin+fluc}.

\subsubsection{Fluctuating hydrodynamic equation}
It is tempting to extend the Chapman-Enskog expansion seen previously to pass from a kinetic equation to a hydrodynamic equation as $\ve \to 0$ in the context of the {\textit{fluctuating}} kinetic equation in order to get a {\textit{fluctuating}} hydrodynamic equation. This approach can be formally carried on, see Appendix \ref{app:chap+fluc}. However, at the difference of the (non fluctuating) Chapman-Enskog expansion which is in some cases under good mathematical control (see for instance \cite{LSR14} for a review on the fluid limits of the Boltzmann equation), there are serious difficulties with such approach when we take into account the finite size fluctuations. 

Indeed, the mathematical status of the Dean equation is dubious: even for finite $N$, it is difficult to make sense of the equation, from a rigorous point of view. By contrast, the large deviation principle that we develop in the next section has a clear meaning and is hence a safer starting point. Moreover, it provides interesting quantitative  informations about the macroscopic evolution of the system.  


\section{Main result: $\Gamma$- convergence of the rate function in the 
limit $\ve \to 0$ }
\label{sec:mainresult}


Before stating the main result of this paper we need to introduce a theoretical framework and some notation.

\subsection{Preliminary on $H^{-1}$ norms and $\Gamma$-convergence}

We first recall some basic facts about the notion of $\Gamma$-convergence and $H_{-1}$-norms. \\

The notion of $\Gamma$-convergence is a powerful notion to study limiting behavior of variational problems depending on some parameter, say $\nu$. If we aim to study the asymptotic behavior of $\inf_x F^\nu (x)$ as $\nu \to 0$, a natural but usually intractable strategy consists to compute a minimizer $x^\nu$ and to study the limit of $F^\nu (x^\nu)$. Instead, $\Gamma$-convergence avoids a direct computation of $x^\nu$ and provides a framework to approximate the family of variational problems $\inf_x F^\nu (x)$ by an effective variational problem $\inf_{x} F(x)$ where the functional $F$ is the ``$\Gamma$-limit" of the functionals $(F^\nu)_{\nu}$. In many cases, even if $\tilde F (x) =\lim_{\nu \to 0} F^\nu (x)$ exists for any $x$, the $\Gamma$-limit $F$ does not coincide with $\tilde F$, and while $\inf_x F^\nu (x)$ converges to $\inf_x F(x)$, it is not true that $\inf_x F(x) = \inf_x {\tilde F} (x)$. We refer the reader for example to \cite{Braides,DelMaso} for more informations and various examples. The connection between $\Gamma$-convergence and LDP problems is studied for example in \cite{Mauro1, DiM}.    

\begin{Definition}
\label{def:gamma-conv-def}
A sequence of functional $F^\nu: E \to \RR$ defined on some topological space $E$ $\Gamma$-converges to $F: E \to\RR$ as $\nu \to 0$ if 
\begin{enumerate}[1.]
\item for any $x \in E$ and any sequence $x^\nu \to x$, $\lim_{\mu \to 0}  \inf_{\nu \le \mu} F^\nu (x^\nu) \ge F (x)$ {\rm{($\Gamma$-liminf inequality)}};
\item there exists a sequence $x^\nu \to x$ such that $\lim_{\mu \to 0} \sup_{\nu \le \mu} F^\nu (x^\nu) \le F(x)$ {\rm{($\Gamma$-limsup inequality)}}.
\end{enumerate}
\end{Definition}

As we will see below the Large Deviations Functionals studied in this paper are expressed in terms of some weighted $H_{-1}$ norms.

\begin{Definition}
\label{def:H-1-norm}
Let $\Omega \subset \R^d$ be an open subset of $\R^d$ and $\chi : \Omega \to S_d^+ (\R)$ a function taking values in the set of positive definite symmetric matrices. The square of the $\chi$ weighted $H_{-1}$-norm of a scalar function $g :\Omega \to \R$ is defined by
\begin{equation}
 \label{eq:h-1-inf}
\left\Vert g \right\Vert_{-1, \chi}^2 = \inf_{c} \left\{ \int_\Omega \,  {c} \cdot \chi^{-1} {c}   \, d\omega\; ; \; \nabla \cdot {c} ={g}\right\}
\end{equation}
 where $\cdot $ is the usual scalar product on $\R^d$ and the infimum is carried over all smooth vector fields (called controls) ${c}:\Omega \to \R^d$. Alternatively it can be expressed by
 \begin{equation}
 \label{eq:h-1-sup}
\left\Vert g \right\Vert_{-1, \chi}^2 = 2 \sup_{\varphi} \left\{ \int_\Omega g \varphi d\omega -\tfrac{1}{2} \int \,  \chi \nabla \varphi\, \cdot \, \nabla \varphi  \, d\omega \right\}
\end{equation}
where the supremum is now taken over all smooth scalar functions $\varphi:\Omega \to \R$.
\end{Definition}

Since we want to study the $\Gamma$-limit of the rate functional \eqref{eq:LDF-ve} defined below  in terms of weighted $H_{-1}$-norms \eqref{eq:H-1-g-0}, the sup (resp. inf) representation will be useful to get the $\Gamma$-liminf (resp. the $\Gamma$-limsup).\\

\subsection{Kinetic large deviation functional}

We recall that we restrict our study to the case for which the set of local equilibria are all in the form $(q, \theta) \to \rho (q) V (\theta)$. 

The LDP with speed $N$ for the empirical density corresponding to the Dean equation \eqref{eq:fluct_kin_start045} on the time window $[0,T]$, was obtained by Dawson and G\"artner in the case $R=\infty$ \cite{Dawson,Dawson2}, and is given for any function $f:=f(q, \theta, t)$ by \cite{Tanaka,Dawson3,Kipnis2, Dawson5, Dawson4,Dai Pra, Fontbona,  Budhiraja, Fischer,Bar1,Bouchet, Muller, Reygner, Dos,Coppini}
\begin{equation}
\label{eq:LDF-ve}
\mathcal{I}_T^\ve(f) = \frac{1}{4} \int_0^T \left\lVert A_f^\ve (f) \right\rVert^2_{-1, \Gamma f} \, dt
\end{equation}
where
\begin{equation}
\label{eq:afeps}
\begin{split}
A_f^\ve (f)  &= \ve \partial_t f \, + \, \mc T_0  (f) \, -\, \ve^{-1} \, {\mc D}_f (f)\\
\end{split}
\end{equation}
with the $h>0$ weighted $H_{-1}$-norm {\footnote{To be precise, the norm defined is the standard quadratic norm in the $q$ variable and a weighted $H_{-1}$-norm in the $\theta$ variable.}} of the function $g:=g(q, \theta)$ defined by 
\begin{equation}
\label{eq:H-1-g-0}
\begin{split}
\lVert g \rVert^2_{-1,h} &= \inf_{\varphi} \left\{ \int \frac{\varphi^2}{h} \, dq\, d\theta~,~\partial_\theta \varphi = g \right\}\\
&= 2 \sup_{\varphi} \left\{ \int g \varphi \, dq \, d\theta\, -\,  \frac12 \int (\partial_\theta \varphi)^2 g \, dq\, d\theta \right\}. 
\end{split}
\end{equation}
In the formula above, the test functions $\varphi$ depend on position $q$ and angle $\theta$ and $h$ is evaluated at fixed time $t$.\\

\subsection{Linearized operator}

 We define the linear operator $\mathcal L_f$ as the linearized operator of the nonlinear operator ${\mathcal D}_f (f)$ at $f$, i.e.
\begin{equation}
\label{eq:lineariseL}
{\mathcal L}_f (g) := \lim_{\delta \to 0} \tfrac{{\mc D}_{f+\delta g} (f+\delta g) -{\mc D}_f (f) }{\delta}.
\end{equation}
In particular, if $f=f_{\rm{le}}$, we show in Appendix \ref{sec:linearizedoperator}  that ${\mathcal L}_{f_{\rm{le}}} ={\mathcal L}_G$ and that the latter acts on a test function $g$ as
\begin{equation}
\begin{split}
{\mathcal L}_G (g)&= \partial_\theta \left( \Gamma [ \partial_\theta U+ F(G) +  \partial_\theta] g \right) \\
&+  \partial_\theta \left( \Gamma G \Big[ F(g)  - \big\langle \tfrac{g}{G} \big\rangle_G \, F(G) \Big] \right). 
\end{split}
\label{LO}
\end{equation}
Note that thanks to \eqref{eq:fluxnul} we have that
\begin{equation}
\label{eq:LVV0}
{\mc L}_G (G) =0.
\end{equation}
Its adjoint with respect to the standard scalar product w.r.t. $d\theta$ is denoted by ${\mathcal L}_G^\dagger$ and its action on a test function $\varphi$ is given by
\begin{equation}
\label{eq:lvdagger}
\begin{split}
{\mathcal L}^\dagger_G (\varphi) &= - \Gamma [\partial_\theta U + F(G) ]\partial_\theta\varphi\, +\, \partial_\theta (\Gamma \partial_\theta \varphi) \\
&-  \left(F^\dagger ( G\Gamma  \partial_\theta \varphi)  - \big\langle  F(G) \Gamma  \partial_\theta \varphi \rangle_G \right)
\end{split}
\end{equation}
where $F^\dagger (\theta, \theta'):= F(\theta', \theta)$. Note that
\begin{equation}
\label{eq:LdaggerVV0}
{\mc L}_G^\dagger (1) =0.
\end{equation}
In the sequel we will assume the following {\textit{dissipative condition}}: 

\begin{equation}
\label{eq:vraiehypothese}
\begin{split}
&\text{If $g$ is such that $\int d\theta g =0$ then :} \int d\theta \, G^{-1} \,  {\mc L}_G (g) \, g \,  \le 0 \\
&\quad \text{with equality if an only if $g=0$}.\\
&\\
\end{split}
\end{equation}

\noindent As shown in the proof of Proposition \ref{prop:kernel-general} this condition implies that\\
\begin{equation}
\label{eq:kerLvdagger}
\begin{split}
&{\rm{Ker}} ({\mc L}_G^\dagger) ={\rm{Span}} (\bf 1), \quad {\rm{Ker}} ({\mc L}_G) ={\rm{Span}} (G).\\
&{\rm{Range}} ({\mc L}_G^\dagger) =\Big\{ u\; ;\; \langle u \rangle_G =0 \Big\}, \quad {\rm{Range}} ({\mc L}_G) =\Big\{ u\; ;\; \langle \tfrac{u}{G} \rangle_G =0 \Big\}.\\
\end{split}
\end{equation}

\begin{remark}
The equations \eqref{eq:LVV0} and \eqref{eq:LdaggerVV0} show that in the first line of \eqref{eq:kerLvdagger}, two inclusions always trivially hold. Moreover, it is easy to show that the first line of \eqref{eq:kerLvdagger} implies the second one since we recall that if $A$ is an operator then the range of $A^\dagger$  is equal to the orthogonal of the kernel of $A$.
\end{remark}

%
%


By \eqref{eq:kerLvdagger}, since $\overline V$ defined in \eqref{eq:u-def} is such that $\langle {\overline V}\rangle_G  =0$, there exists a vector field $\psi :=\psi(\theta) = (\psi_1 (\theta), \ldots, \psi_n (\theta)) \in \R^n$ such that
\begin{equation}
\label{eq:psi-def}
{\mc L}_G^\dagger \psi_k= - {\overline {V}}_k
\end{equation}
and the solution is unique up to some additive constant vector field.\\

We now introduce two square positive symmetric matrices of size $n$:  $\bf D$ (diffusivity) and $\BF \sigma$ (mobility). For any $\, k, \ell \in \{1,\ldots, n\}$ the entries of the matrices are defined {\footnote{For $\BF D$, the formulas give the same results if $\psi_i$ is replaced by $\psi_i +C_i$ where $C_i$ is an arbitrary constant.}} by:\\ 
\begin{equation}
\label{eq:defD}
\begin{split}
{\BF D}_{k \ell } &=  \frac{1}{2} \left(\left\langle \psi_k , {\overline V}_\ell  \right\rangle_G+ \left\langle \psi_\ell , {\overline V}_k  \right\rangle_G\right), 
\end{split}
\end{equation}
and
\begin{equation}
\label{eq:defsigma}
{\BF \sigma}_{k\ell}= \left\langle \partial_\theta \psi_k\,  ,\, \Gamma\,   \partial_\theta \psi_\ell  \right\rangle_G.
\end{equation}

\begin{remark}
The matrix $\BF \sigma$ is positive since if $x=(x_1,\ldots, x_n)\in \R^n$ then 
\begin{equation*}
x \cdot {\BF  \sigma }x = \left\langle \Gamma \,  [\partial_\theta \left( x \cdot  \psi \right)]^2 \right\rangle_G \ge 0
\end{equation*}
with equality if and only if for any $\theta$, $(x \cdot  \psi) (\theta)=0$ (we can always assume that $\psi$ is centered), which implies by \eqref{eq:psi-def} that $x \cdot \partial_\theta V (\theta) =0$. This cannot hold if $x\ne 0$ since we assumed \eqref{eq:nondegenerateV}. 
The fact that the matrice $\BF D$ is non-negative is a consequence of \eqref{eq:vraiehypothese} because
\begin{equation*}
x \cdot {\BF D} x = - \left\langle {\mc L}_G^\dagger \left( x \cdot \psi \right), x\cdot \psi  \right\rangle_G =- \int d\theta G^{-1} \; {\mc L}_G ( x \cdot G \psi) \; (x\cdot G \psi) \ge 0
\end{equation*}
with equality if and only if $x\cdot G\psi =0$, i.e. $x \cdot \psi =0$ (we can always assume that $G\psi$ is centered because $\psi$ can be chosen up to a constant vector field) which as above implies $x=0$.
\end{remark}


\begin{remark}
For the general model defined by \eqref{eq:general-model-manifold}, the only modifications are that $\partial_\theta$ has to be replaced by the gradient $\nabla_\theta$ and \eqref{eq:defsigma} has to be replaced by 
\begin{equation*}
{\BF \sigma}_{k\ell}= \sum_{a=1}^m \; \left\langle {\mathfrak g} \left(  A_a\, , \,  \nabla_\theta \psi_k\right)  \;  {\mathfrak g} \left( A_a \, , \,  \nabla_\theta \psi_\ell \right) \right\rangle_G.
\end{equation*}

\end{remark}

\subsection{Statement of the result}

We can now state our main result.

\begin{Theorem}
\label{thm:main}
Assume the dissipative condition  \eqref{eq:vraiehypothese} (and hence \eqref{eq:kerLvdagger}). Then, as $\ve \to 0$, the rate functional $\mathcal{I}_T^\ve$ in \eqref{eq:LDF-ve} $\Gamma$-converges to $\mathcal I_T$ given by 
\begin{equation}
\label{eq:LDF-IT}
\mathcal I_T (f) =
\begin{cases}
&\frac{1}{4} \int_0^T \, \left\Vert \partial_t \rho - \nabla \cdot {\bf D} \nabla \rho \right\Vert_{-1, \rho {\BF \sigma}}^2 \, dt \;\\
& \hspace{3cm} {\rm{if}} \; f(q, \theta,t) =f_{{\rm{le}}}(q, \theta,t)= \rho (q, t) G (\theta),\\
&+\infty \quad {\rm{otherwise}},
\end{cases}
\end{equation}
where the matrices $\bf D$ and $\BF \sigma$ are the matrices given by \eqref{eq:defD} and \eqref{eq:defsigma}. 
\end{Theorem}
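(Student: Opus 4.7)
The plan is to follow the classical two-step scheme for $\Gamma$-convergence: establish the $\Gamma$-liminf by plugging well-chosen two-scale test functions into the sup representation \eqref{eq:H-1-g-0} of the $H_{-1}$ norm, and obtain the $\Gamma$-limsup by constructing an explicit recovery sequence $f^\ve = \rho G + \ve\, h + \dots$ via a Chapman-Enskog ansatz. Throughout, the cell corrector $\psi$ from \eqref{eq:psi-def} plays the central role of linking the microscopic drift $\overline V$ to the macroscopic matrices $\BF D$ and $\BF \sigma$.

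\textbf{Compactness and local-equilibrium structure.} First I would check that any sequence $f^\ve \to f$ with $\liminf_\ve \mathcal I_T^\ve(f^\ve) < \infty$ must satisfy $f(q,\theta,t)=\rho(q,t)G(\theta)$. Using the inf representation \eqref{eq:h-1-inf} yields controls $c^\ve$ with $\partial_\theta c^\ve = A_{f^\ve}^\ve(f^\ve)$ and $\int_0^T\!\!\int |c^\ve|^2/(\Gamma f^\ve)\,dq\,d\theta\,dt$ uniformly bounded. Multiplying the constraint by $\ve$ rearranges it into
$$\mc D_{f^\ve}(f^\ve) \;=\; \ve^{2}\partial_t f^\ve + \ve\,\mc T_0(f^\ve) - \ve\,\partial_\theta c^\ve,$$
whose right-hand side tends to zero in a weak sense. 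Hence in the limit $\mc D_f(f)=0$, so that $f$ is a local equilibrium, and by the uniqueness assumption it must be of the form $\rho G$.

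\textbf{$\Gamma$-liminf inequality.} For any smooth $\varphi_0(q,t)$ I would insert the two-scale test function
$$\varphi^\ve(q,\theta,t) \;=\; \varphi_0(q,t) \;-\; \ve\,\psi(\theta)\cdot\nabla_q\varphi_0(q,t)$$
into the sup representation of $\|A_{f^\ve}^\ve(f^\ve)\|_{-1,\Gamma f^\ve}^2$. Integrating by parts twice transfers $\ve^{-1}\mc D_{f^\ve}(f^\ve)$ onto $\psi$; the defining relation $\mc L_G^\dagger\psi = -\overline V$ cancels exactly the singular contribution $\int \mc T_0(\rho G)\varphi_0$. What survives at order $O(1)$ in the linear term is $\int(\partial_t\rho - \nabla\cdot\BF D\nabla\rho)\,\varphi_0$ by definition \eqref{eq:defD}, whereas the penalty $\int(\partial_\theta\varphi^\ve)^2\Gamma f^\ve$ converges to $\int \rho\,\nabla\varphi_0\cdot\BF\sigma\nabla\varphi_0$ by \eqref{eq:defsigma}. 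Taking the supremum over $\varphi_0$ and then integrating in time reconstructs precisely $\mc I_T(f)$ of \eqref{eq:LDF-IT}.

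\textbf{$\Gamma$-limsup inequality.} Given $f=\rho G$ with $\mc I_T(f)<\infty$ and an optimal potential $\varphi_0^*$ realizing the $H_{-1}$ norm appearing in \eqref{eq:LDF-IT}, I would build a recovery sequence
$$f^\ve(q,\theta,t) \;=\; \rho(q,t)G(\theta) \;+\; \ve\,\chi(\theta)\cdot\nabla_q\rho(q,t) \;+\; \ve\, G(\theta)\,\varphi_0^*(q,t) \;+\; O(\ve^2),$$
where the first corrector $\chi$ solves the \emph{direct} cell problem $\mc L_G\chi_k = \overline V_k G$ (solvable by \eqref{eq:kerLvdagger} since $\langle \overline V\rangle_G=0$). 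The choice of $\chi$ kills the $O(1)$ term $\overline V G\cdot\nabla\rho - \mc L_G(\chi\cdot\nabla\rho)$ in $A_{f^\ve}^\ve(f^\ve)$, while the extra $\ve G\varphi_0^*$ tunes the remaining $O(1)$ numerator so that the optimal control for the $H_{-1}$ norm at level $\ve$ matches the optimal control of the $\Gamma$-limit. A direct Taylor expansion of $\mc D_{f^\ve}(f^\ve)$, together with the duality between $\chi$ and $\psi$ arising from integration against $G$, shows that the inf in \eqref{eq:h-1-inf} for $f^\ve$ is achieved (up to $o(1)$) by a control whose cost converges to the cost of $\varphi_0^*$ in the limit functional, i.e. $\mc I_T^\ve(f^\ve)\to \mc I_T(f)$.

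\textbf{Main obstacle.} The main difficulty is to rigorously pass to the limit in the nonlinear operator $\mc D_{f^\ve}(f^\ve)$: the mean-field ratio $F(f)/\Pi(f)$ produces quadratic and higher corrections in the Chapman-Enskog expansion that must be controlled uniformly, and at the same time the weak convergence afforded by bounded rate functional must be upgraded to pointwise identification of $\rho$ as the unique profile. Both issues rely crucially on the dissipative assumption \eqref{eq:vraiehypothese}, which provides the spectral gap ensuring invertibility of $\mc L_G$ on the subspace given by \eqref{eq:kerLvdagger} and gives quantitative control of the correctors $\chi,\psi$.
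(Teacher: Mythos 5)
Your overall two-step scheme (sup representation for the liminf, recovery sequence via correctors for the limsup) is the same as the paper's, but both halves, as written, contain genuine gaps.

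For the $\Gamma$-liminf, your test function $\varphi^\ve=\varphi_0(q,t)-\ve\,\psi\cdot\nabla_q\varphi_0$ is off by a factor $\ve^{-1}$. Along a sequence $f^\ve=f_{\rm{le}}+\ve f_1+\dots$, the $O(1)$ part of $A^\ve_{f^\ve}(f^\ve)$, namely $\mc T_0(f_{\rm{le}})-\mc L_G(f_1)$, is annihilated by any $\theta$-independent test function (since $\langle\overline V\rangle_G=0$ and $\mc L_G^\dagger(1)=0$), and your corrector meets $\ve^{-1}\mc D_{f^\ve}(f^\ve)=\mc L_G(f_1)+O(\ve)$ only with a prefactor $\ve$; hence the linear term $2\langle\varphi^\ve,A^\ve_{f^\ve}(f^\ve)\rangle$ is $O(\ve)$ and the penalty $\langle(\partial_\theta\varphi^\ve)^2,\Gamma f^\ve\rangle$ is $O(\ve^2)$, so your bound degenerates to $\liminf\ge 0$; nothing survives at order one, contrary to what you claim. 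The paper's choice is $\varphi^\ve=\ve^{-1}\varphi_{-1}(q,t)+\psi\cdot\nabla\varphi_{-1}$: the $\ve^{-1}$ amplifies the order-$\ve$ part of $A^\ve$ (which carries $\partial_t\rho$ and, through $\mc T_0(f_1)$, the diffusive term) to order one while the quadratic penalty remains of order one. The sign of the corrector also matters: the $f_1$-dependent contributions cancel only if $\mc L_G^\dagger\varphi_0=-\mc T_0\varphi_{-1}$, i.e. with $+\psi\cdot\nabla\varphi_{-1}$ under the convention \eqref{eq:psi-def}; with your minus sign the lower bound would depend on $f_1$, i.e. on the particular approximating sequence, which is not admissible for a $\Gamma$-liminf.

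For the $\Gamma$-limsup, your recovery sequence fails. The term $\ve\,G(\theta)\varphi_0^*(q,t)$ is invisible at the relevant orders: $\mc L_G(G\varphi_0^*)=0$ by \eqref{eq:fluxnul}, so it generates no control at order one, and $\int d\theta\,\mc T_0(G\varphi_0^*)=\langle\overline V\rangle_G\cdot\nabla\varphi_0^*=0$, so it does not restore the solvability of $\partial_\theta c=A^\ve_{f^\ve}(f^\ve)$ either. With your $f^\ve$ the order-$\ve$ part of $A^\ve_{f^\ve}(f^\ve)$ has $\theta$-average $\ve\,(\partial_t\rho-\nabla\cdot\BF D\nabla\rho)\neq 0$, so no admissible control exists and $\mc I_T^\ve(f^\ve)=+\infty$ for every $\ve$. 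In the paper the second corrector is $-\ve\,a(q,t)\cdot G\xi(\theta)$ with $\langle\xi\rangle_G=0$: it produces the leading control $c=a\cdot W$ with $\partial_\theta W=\mc L_G(G\xi)$, and the $O(\ve)$ compatibility condition \eqref{eq:constraintsf1} becomes the constraint $\partial_t\rho-\nabla\cdot\BF D\nabla\rho=\nabla\cdot\BF E a$. Crucially, the $\theta$-shape $\xi$ must then be optimized: for a generic $\xi$ one only obtains the weight $\BF{E R^{-1} E^\dagger}\le\BF\sigma$ (Lemma \ref{lem:shur}), hence a strictly larger cost, and equality requires the specific choice \eqref{eq:choicechi}, i.e. $W_k=\Gamma G\,\partial_\theta\psi_k$, which yields $\BF E=\BF R=\BF\sigma$. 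The duality you invoke between $\chi$ and $\psi$ only gives the diffusivity identity \eqref{eq:non-adjoint} and says nothing about the mobility; this optimization over the first-order corrector is the heart of the upper bound and is missing from your proposal.
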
 

\begin{proof} 
This result is proved in Section \ref{sec:proofgamma}.
\end{proof}

We show in Proposition \ref{prop:kernel-general} that if the interaction is sufficiently weak then  \eqref{eq:vraiehypothese} holds. We observe however that the previous Theorem holds under weaker conditions, for example if \eqref{eq:kerLvdagger} is satisfied and if we have a unique solution to \eqref{eq:fluxnul}. Moreover, in \cite{BBCNP}, we focus on the active particle exemple \eqref{eq:pot-pc} where we will show that this proposition holds and that, by solving exactly \eqref{eq:psi-def} we can obtain explicit expressions for the diffusivity matrix \label{eq:defD} and for the mobility matrix \label{eq:defsigma}; we will then be able to infer some physical consequences for the physical system.

The form taken by the limiting functional is reminiscent of  the functional appearing in the Macroscopic Fluctuations Theory for diffusive systems \cite{Bertini} with the particular features that the diffusivity is independent of the density and the mobility is linear in the density. This is also the case for independent diffusion processes in the plane, but moreover there a proportionality between $\BF D$ and $\BF \sigma$ would hold and this is usually not the case here and in particular for the case \eqref{eq:pot-pc}. This absence of proportionality is a manifestation of the interactions at a macroscopic level. Observe that the limiting rate functional corresponds formally to Dean's equation for the empirical density evolving as
\begin{equation}
\label{eq:Dean-eq-rho}
\partial_{t} {\rho} =\nabla \cdot {\BF D}\nabla {\rho}  +\sqrt{\tfrac{2 }{N}}\nabla\cdot\left(\sqrt{{\rho} {\BF \sigma}}\, \xi\right)
\end{equation}
 with $\xi$ a standard $n$-space dimensional white noise.

\section{Proof of Theorem \ref{thm:main} }
\label{sec:proofgamma}

\subsection{Asymptotic expansion of $A^\ve$}

Recall the equation \eqref{eq:afeps}: 

\begin{equation}
\begin{split}
A_f^\ve (f)  &= \ve \partial_t f \, + \, \mc T_0  (f) \, -\, \ve^{-1} \, {\mc D}_f (f)\\
\end{split}
\end{equation}

Consider a sequence of densities approximating the local equilibrium at order $2$ in $\ve$:
\begin{equation}
\label{eq:feexp}
f^\ve = f_{\rm{le}}+ \varepsilon f_1 +\ve^2 f_2+ O(\ve^3). 
\end{equation}
We want to expand $A_{f^\ve}^\ve (f^{\ve})$ at first order in $\ve$. We have first (use \eqref{eq:lineariseL}) by a Taylor expansion that
\begin{equation*}
{\mc D}_{f^\ve} (f^\ve) ={\mc D}_{f_{\rm{le}}} (f_{\rm{le}}) + \ve {\mc L}_G (f_1) +\ve^2 \partial_\theta {\mc Q} (f_{\rm{le}}, f_1, f_2) + O(\ve^3). 
\end{equation*}
The presence of a $\partial_\theta$ in the last term follows from the presence of a $\partial_\theta$ on the left in the definition \eqref{eq:defDT} of ${\mc D}_f$. In the following we will not use the exact expression of $\mc Q$. Moreover we recall that ${\mc D}_{f_{\rm{le}}} (f_{\rm{le}}) =0$.
Then we get the Taylor expansion 
\begin{equation}
\label{eq:dla0-po}
\begin{split}
A_{f^\ve}^\ve (f^{\ve}) &= {\mc T}_0 (f_{\rm{le}}) - {\mc L}_G (f_1) \\
& +\, \ve \left[\partial_t f_{\rm{le}} +{\mc T}_0 (f_1) -\partial_\theta {\mc Q}  (f_{\rm{le}}, f_1, f_2)  \right] +O(\ve^2).
\end{split}
\end{equation}

\subsection{$\Gamma$-liminf}

In this subsection we prove the following proposition.
\begin{proposition}
\label{prop:Gammaliminf}
Assume \eqref{eq:vraiehypothese} (and hence \eqref{eq:kerLvdagger}).  Let $f$ be a time dependent angle-position density and consider a sequence  $(f^\ve)_{\ve}$ of time dependent angle-position densities in the form
\begin{equation}
f^\ve = f + \varepsilon f_1 +\ve^2 f_2+ O(\ve^3).
\label{seq} 
\end{equation}
Then we have the following $\Gamma$-liminf inequality
\begin{equation*}
\liminf_{\ve \to 0} \;  \mathcal{I}_T^\ve (f^\ve)  \; \ge \; \mathcal{I}_T (f).
\end{equation*}
where ${\mc I}_T$ is defined by \eqref{eq:LDF-IT}.
\end{proposition}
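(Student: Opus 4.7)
The plan is to use the supremum representation \eqref{eq:H-1-g-0} of the weighted $H^{-1}$-norm to reduce the liminf inequality to an asymptotic variational bound, tested against Chapman--Enskog-type functions. For any smooth $\varphi^\ve=\varphi^\ve(q,\theta,t)$ one has
\[
\mc I_T^\ve(f^\ve)\;\ge\;\tfrac12\int_0^T\Big\{\int A_{f^\ve}^\ve(f^\ve)\,\varphi^\ve\, dq\, d\theta - \tfrac12\int \Gamma f^\ve\, (\partial_\theta\varphi^\ve)^2\, dq\, d\theta\Big\}\, dt.
\]
For each smooth slow-variable test function $\phi=\phi(q,t)$ I will construct a sequence $\varphi^\ve$ whose right-hand side above converges, as $\ve\to 0$, to the $\phi$-slice of the sup representation of $\mc I_T(f)$; taking the supremum over $\phi$ then yields the bound.

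First I would dispose of the case where $f$ is not a local equilibrium, i.e.\ $\mc D_f(f)\not\equiv 0$ on $[0,T]$. Here $\mc I_T(f)=+\infty$, and continuity of the nonlinear map $g\mapsto\mc D_g(g)$ gives $A_{f^\ve}^\ve(f^\ve)=-\ve^{-1}\mc D_f(f)+O(1)$. Picking an $\ve$-independent smooth $\varphi_0$ with $\int_0^T\!\!\int \mc D_f(f)\,\varphi_0\,dq\,d\theta\,dt<0$ (for example, a smooth negative multiple of $\mc D_f(f)$ supported where it does not vanish), the displayed bound grows like $\ve^{-1}$, so $\liminf \mc I_T^\ve(f^\ve)=+\infty$.

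The main case is $f(q,\theta,t)=\rho(q,t)G(\theta)$, where the expansion \eqref{eq:dla0-po} applies. I would use the Chapman--Enskog test function
\[
\varphi^\ve(q,\theta,t) \;:=\; \ve^{-1}\phi(q,t) \;+\; \psi_0(\theta)\cdot\nabla\phi(q,t),\qquad \psi_0:=\psi-\langle\psi\rangle_G,
\]
with $\psi$ as in \eqref{eq:psi-def}. Substituting \eqref{eq:dla0-po} and integrating by parts termwise, the order-$\ve^{-1}$ pairing with $\phi$ vanishes since $\langle\overline V\rangle_G=0$ and $\mc L_G^\dagger(1)=0$. The $O(1)$ residual splits in two. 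Pairing the $\ve$-order term of $A^\ve$ with $\ve^{-1}\phi$ gives $\int\partial_t\rho\,\phi\,dq-\int f_1\,\overline V\cdot\nabla\phi\,dq\,d\theta$ (using $\int G\,d\theta=1$ and an IBP in $q$). Pairing the leading term $\mc T_0(f_{\rm le})-\mc L_G(f_1)$ with $\psi_0\cdot\nabla\phi$, via the adjoint identity $\mc L_G^\dagger\psi_0=-\overline V$, produces $-\int f_1\,\overline V\cdot\nabla\phi\,dq\,d\theta$ from the $\mc L_G(f_1)$ piece and $\sum_{k,\ell}\langle\overline V_k\psi_\ell\rangle_G\int\partial_{q_k}\rho\,\partial_{q_\ell}\phi\,dq$ from the $\mc T_0(f_{\rm le})$ piece; in the latter, IBP in $q$ kills the antisymmetric part in $(k,\ell)$ by commutation of partials, while the symmetric part equals $\BF D_{k\ell}$ and yields $-\int\phi\,\nabla\cdot\BF D\nabla\rho\,dq$. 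The two $f_1$-dependent contributions cancel exactly, leaving $\int(\partial_t\rho-\nabla\cdot\BF D\nabla\rho)\phi\,dq+O(\ve)$. The quadratic term in $\varphi^\ve$ reduces to $\int\rho\,(\nabla\phi)\cdot\BF\sigma\,\nabla\phi\,dq+O(\ve)$ by the definition \eqref{eq:defsigma} of $\BF\sigma$. Passing to $\ve\to 0$ and then to the supremum over $\phi$ reconstructs $\mc I_T(f)$ via \eqref{eq:H-1-g-0}.

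The hard part is the exact cancellation of the $f_1$-dependent terms: this is the algebraic identity that decouples the $\Gamma$-limit from the microscopic correction $f_1$, and it is what forces $\varphi^\ve$ to have the Chapman--Enskog form built from the corrector $\psi$. The remaining technical work---sign bookkeeping in the several integrations by parts, the vanishing of the antisymmetric part in the pairing, and uniform-in-$t$ control of the $O(\ve)$ remainders so that the $t$-integral can be interchanged with the $\ve\to 0$ limit---is routine but must be carried out carefully.
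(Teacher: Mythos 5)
Your proposal is correct and takes essentially the same route as the paper: the sup-representation of the weighted $H^{-1}$ norm tested against the Chapman--Enskog test function $\ve^{-1}\phi(q,t)+\psi(\theta)\cdot\nabla\phi(q,t)$, with the $f_1$-dependence cancelling thanks to $\mc L_G^\dagger\psi=-\overline V$, the symmetrization in $q$ producing $\BF D$, the quadratic term producing $\rho\,\BF\sigma$, and a final supremum over $\phi$ reconstructing $\mc I_T$ via \eqref{eq:h-1-sup}. Only a sign slip to fix: since $A^\ve$ contains $-\mc L_G(f_1)$, its pairing with $\psi_0\cdot\nabla\phi$ equals $+\int f_1\,\overline V\cdot\nabla\phi\,dq\,d\theta$ (not $-$), which is exactly what cancels the $-\int f_1\,\overline V\cdot\nabla\phi\,dq\,d\theta$ coming from the pairing of $\ve^{-1}\phi$ with the order-$\ve$ term, as you then correctly assert; your handling of the non-local-equilibrium case (choosing a fixed $\varphi_0$ with $\int \mc D_f(f)\,\varphi_0<0$ so the bound blows up like $\ve^{-1}$) is in fact more careful than the paper's brief remark.
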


\begin{proof}
In order to simplify notation we denote by $\langle f, g \rangle$ the scalar product of the functions $f(q, \theta)$ and $g(q, \theta)$ with respect to $dq \, d\theta$. Recall the definition \eqref{eq:LDF-ve} of $\mc I_T^\ve$ and the Hamiltonian variational representation of the $H_{-1}$-norm in terms of a supremum given in \eqref{eq:h-1-sup}. For any test function $\varphi (q, \theta, t)$ we have 
\begin{equation}
\label{eq:lb-iteps}
{\mc I}_T^\ve (f^\ve) \ge \cfrac{1}{4} \int_0^T dt \left\{ 2 \langle \varphi, A_{f^\ve}^\ve (f^\ve)  \rangle  - \langle [\partial_\theta \varphi]^2, {\Gamma} f^\ve \rangle \right\}. 
\end{equation}
The aim is thus to choose a sequence $(\varphi^\ve)_{\ve}$ of test functions in order to maximize the righthand side of the previous expression in the limit $\ve \to 0$.

We observe first that if $f$ is not a local equilibrium  then ${\mathcal D}_f (f)\ne 0$ and ${\mathcal I}_T (f) =+\infty$. Hence if $(f^\ve)_{\ve}$ converges to $f$, the term $A_{f^\ve}^\ve (f^\ve)$ becomes equivalent as $\ve \to 0$ to $-\ve^{-1} {\mathcal D}_f (f)$ (see \eqref{eq:afeps}) so that by choosing the test function $\varphi=1$ in the previous formula we get
 $$\liminf_{\ve \to 0} {\mathcal I}_T^\ve (f^\ve) =+\infty.$$

Let now  $(f^\ve)_{\ve}$ be a sequence like in \eqref{eq:feexp} that converges towards a local equilibrium $f_{\rm{le}}(q, \theta, t) := \rho (q, t) V(\theta) $ as $\ve \to 0$. We consider a sequence of test functions $(\varphi^\ve)_{\ve}$ in the form 
\begin{equation*}
\varphi^\ve (q, \theta,t)= \ve^{-1}  \varphi_{-1} (q, \theta, t) +\varphi_0 (q, \theta, t).
\end{equation*}
By using \eqref{eq:dla0-po} and plugging this choice of test function in the righthand side of \eqref{eq:lb-iteps} we get
\begin{equation*}
{\mc I}_T^\ve (f^\ve) \ge \cfrac{1}{2} \int_0^T dt \left\{\ve^{-2} X_t + \ve^{-1} Y_t + Z_t \right\} \; +\; O(\ve) 
\end{equation*}
where \\
\begin{equation*}
\begin{split}
X &= -\tfrac{1}{2}\,  \langle (\partial_\theta \varphi_{-1})^2 , {\Gamma} f_{\rm{le}} \rangle\\
&\\
Y &= \left\langle \varphi_{-1}, {\mc T}_0 (f_{\rm{le}}) - {\mc L}_G (f_1)  \right\rangle -\tfrac{1}{2} \, \langle (\partial_\theta \varphi_{-1})^2 , \textcolor{black}{\Gamma}f_1 \rangle -\langle (\partial_\theta \varphi_{-1}) (\partial_\theta \varphi_{0}) , \textcolor{black}{\Gamma}f_{\rm{le}} \rangle  \\
&\\
Z &= \left\langle \varphi_{0}, {\mc T}_0 (f_{\rm{le}}) - {\mc L}_G (f_1)  \right\rangle + \left\langle \varphi_{-1} \, , \, \left[\partial_t f_{\rm{le}} +{\mc T}_0 (f_1) -\partial_\theta {\mc Q}  (f_{\rm{le}}, f_1, f_2)  \right] \right\rangle\\
&-\tfrac{1}{2} \, \langle (\partial_\theta \varphi_{-1})^2 ,\textcolor{black}{\Gamma} f_2 \rangle -\langle (\partial_\theta \varphi_{-1}) (\partial_\theta \varphi_{0}) , \textcolor{black}{\Gamma} f_{1} \rangle -\tfrac{1}{2} \, \langle (\partial_\theta \varphi_{0})^2 , \textcolor{black}{\Gamma}f_{\rm{le}}  \rangle 
\end{split}
\end{equation*}

Since $X$ is negative, in order to maximize the righthand side of \eqref{eq:lb-iteps} in the limit     $\ve \to 0$, we have to choose the test function in order to cancel $X$, i.e. 
\begin{equation*}
\varphi_{-1} (q, \theta,t) := \varphi_{-1} (q,t).
\end{equation*}
This implies that the second and third term in $Y$ are zero. Moreover the first one is also zero because:
\begin{itemize}
\item First, recalling the definition \eqref{eq:T000} of ${\mc T}_0$ and the definition \eqref{eq:u-def} of the vector field $\overline V$, we have that:
\begin{equation*}
\begin{split}
&\left\langle \varphi_{-1}, {\mc T}_0 (f_{\rm{le}}) \right\rangle = \left\langle  \varphi_{-1},  G {\mc T}_0 (\rho ) \right\rangle \\
&= \int dq \varphi_{-1} (q, t)  \left( \int d\theta G (\theta)  \, {\overline V} (\theta) \cdot \nabla \rho (q,t) \Big]\right)=0,
\end{split}
\end{equation*}
because $\langle {\overline V}\rangle_G =0$;\\
\item Secondly, using the expression \eqref{eq:lvdagger} of ${\mc L}_V^\dagger$ and the fact that  $\varphi_{-1}$ is independent of $\theta$, we have that:
\begin{equation*}
\left\langle \varphi_{-1}, {\mc L}_G (f_1)  \right\rangle = \left\langle {\mc L}_G^\dagger (\varphi_{-1}), f_1  \right\rangle=0.\\
\end{equation*}
\end{itemize}
For the term $Z$, since $\varphi_{-1}$ is independent of $\theta$, it can be simplified as 
\begin{equation}
\label{eq:ZZZ}
\begin{split}
Z &= \left\langle \varphi_{0}, {\mc T}_0 (f_{\rm{le}}) - {\mc L}_G (f_1)  \right\rangle + \left\langle \varphi_{-1} \, , \, \left[\partial_t f_{\rm{le}} +{\mc T}_0 (f_1)\right] \right\rangle  -\tfrac{1}{2} \, \langle (\partial_\theta \varphi_{0})^2 ,\textcolor{black}{\Gamma} f_{\rm{le}}  \rangle\\
&= \langle \varphi_{0}, {\mc T}_0 (f_{\rm{le}}) \rangle + \langle \varphi_{-1} \, , \, \partial_t f_{\rm{le}} \rangle  -\tfrac{1}{2} \, \langle (\partial_\theta \varphi_{0})^2 , \textcolor{black}{\Gamma}f_{\rm{le}}  \rangle\\
&- \langle f_1 \,,  \, {\mc L}_G^\dagger (\varphi_0) - {\mc T}_0^\dagger (\varphi_{-1}) \rangle.\\
&\\
\end{split}
\end{equation}
Observe that $Z$ does not depend on $f_2$. By definition of $\Gamma$-convergence the lower bound we have to obtain shall not depend on the way the sequence $(f^{\ve})_\ve$  converges to $f_{\rm{le}}$, i.e. shall not depend on $f_1$. A simple choice is then to take $\varphi_0$ solution of
\begin{equation}
\label{eq:psilop}
{\mc L}_G^\dagger \varphi_0 - {\mc T}_0^\dagger \varphi_{-1}=0 = {\mc L}_G^\dagger \varphi_0 + {\mc T}_0 \varphi_{-1}
\end{equation}
where the last equality is due to the property ${\mc T}_0^\dagger = -{\mc T}_0$. Since 
\begin{equation*}
{\mc T}_0 (\varphi_{-1}) (q, \theta, t)  = {\overline V} (\theta) \cdot \nabla \varphi_{-1} (q, t), 
\end{equation*}
by using the vector field $\psi$ defined in \eqref{eq:psi-def}, a solution to equation \eqref{eq:psilop} is provided by
\begin{equation*}
\varphi_0 (q, \theta, t)  = \psi(\theta) \cdot  \nabla \varphi_{-1} (q, t).
\end{equation*}
To summarize we get the following form for the test function
\begin{equation*}
\varphi^\ve (q, \theta,t) = \ve^{-1} \varphi_{-1} (q, t) + \psi(\theta) \cdot  \nabla \varphi_{-1} (q, t).  
\end{equation*}
We plug this form of the test function in \eqref{eq:ZZZ} and simplify each term. Recall the definition of ${\BF D}$ given in \eqref{eq:defD}. Using the definition \eqref{eq:T000} of ${\mc T}_0$ and performing one spatial integration by parts we have
\begin{equation*}
\langle \varphi_{0}, {\mc T}_0 (f_{\rm{le}}) \rangle = - \int dq \;  \varphi_{-1} (q, t)\;  \nabla \cdot {\bf D} \nabla \rho (q, t). 
\end{equation*}
For the second term we have trivially 
\begin{equation*}
\langle \varphi_{-1} \, , \, \partial_t f_{\rm{le}} \rangle = \int dq \; \varphi_{-1} (q, t) \; \partial_t \rho (q, t).
\end{equation*}
The third one is rewritten as
\begin{equation*}
\langle (\partial_\theta \varphi_{0})^2 ,\textcolor{black}{\Gamma} f_{\rm{le}}  \rangle = \int dq \;  \rho (q, t)\;  \nabla \varphi_{-1} (q, t)\; \cdot\,  {\BF \sigma} \nabla \varphi_{-1} (q,t) \; .
\end{equation*}
with the mobility matrix defined in \eqref{eq:defsigma}. Therefore we have obtained that
\begin{equation*}
\begin{split}
{\mc I}_T^\ve (f^\ve)&  \ge \cfrac{1}{4} \int_0^T dt \left\{ 2 \int dq \; \varphi_{-1} (q, t) \; \Big[ \partial_t \rho (q, t) \, dq -   \nabla \cdot {\bf D} \nabla \rho (q, t)\Big] \right.\\
&\left. \quad \quad \quad \quad  -   \int dq \;  \rho (q, t)\;  \nabla \varphi_{-1} (q, t)\;\cdot\,   {\BF \sigma} \nabla \varphi_{-1} (q,t) \right\} \; + \; O(\ve). 
\end{split}
\end{equation*}
Since $\varphi_{-1}$ is arbitrary we can take the supremum on $\varphi_{-1}$ on the righthand side of the previous expression and we get the result by recalling the variational formula in terms of a supremum for the $H_{-1}$-norm defining $\mc I_T$. 
 
\end{proof}

\begin{Remark}
Strictly speaking, we did not prove here in full rigor the $\Gamma$-liminf convergence of Definition \ref{def:gamma-conv-def} because we did not precise the topology setting and Proposition \ref{prop:Gammaliminf} is proved only for sequences in the form \eqref{seq}.
\end{Remark}

\subsection{$\Gamma$-limsup}

The aim of this section is to prove the following $\Gamma$-limsup property.

\begin{proposition}
Assume \eqref{eq:vraiehypothese} (and hence \eqref{eq:kerLvdagger}).  Let $f$ be a time dependent position-angle density function. There exists a sequence $(f^\ve)_\ve$ of time dependent position-angle density functions converging to $f$ such that
\begin{equation*}
\limsup_{\ve \to 0} {\mc I}_T^\ve (f^\ve) \le {\mc I}_T (f).
\end{equation*}
where ${\mc I}_T$ is defined by \eqref{eq:LDF-IT}.

\end{proposition}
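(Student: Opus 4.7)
If $\mc I_T(f) = +\infty$ there is nothing to prove and one may take $f^\ve \equiv f$, so assume $f(q,\theta,t) = \rho(q,t)\,G(\theta)$ with $\mc I_T(f)<\infty$. Reverse-engineering the test function that saturated the $\Gamma$-liminf in Proposition \ref{prop:Gammaliminf}, the plan is to build a recovery sequence along which the inf-representation \eqref{eq:h-1-inf} of the weighted $H_{-1}$-norm is saturated by the flux dual to
\[
\varphi^\ve(q,\theta,t) := \ve^{-1}\varphi^*(q,t) + \psi(\theta)\cdot\nabla\varphi^*(q,t),
\]
where $\psi$ is from \eqref{eq:psi-def} and $\varphi^*(q,t)$ is a smooth maximizer in the sup-representation of the MFT norm appearing in \eqref{eq:LDF-IT}. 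By construction $\varphi^*$ satisfies the Euler--Lagrange relation
\begin{equation}
\label{eq:ELstarplan}
\partial_t\rho-\nabla\cdot{\BF D}\nabla\rho=-\nabla\cdot(\rho\,{\BF \sigma}\nabla\varphi^*),
\end{equation}
and $4\,\mc I_T(f)=\int_0^T\!\!\int \rho\,\nabla\varphi^*\cdot{\BF \sigma}\nabla\varphi^*\,dq\,dt$. We take $f^\ve$ to be a solution of the tilted deterministic kinetic equation
\begin{equation}
\label{eq:tiltedplan}
A^\ve_{f^\ve}(f^\ve) = -\partial_\theta\bigl(\Gamma f^\ve\,\partial_\theta\varphi^\ve\bigr),\qquad f^\ve|_{t=0}=\rho(\cdot,0)\,G,
\end{equation}
so that $J^\ve:=-\Gamma f^\ve\partial_\theta\varphi^\ve$ is an admissible flux in \eqref{eq:h-1-inf} and
\begin{equation}
\label{eq:upboundplan}
\mc I_T^\ve(f^\ve)\le \tfrac14\int_0^T\!\!\int \Gamma f^\ve(\partial_\theta\varphi^\ve)^2\,dq\,d\theta\,dt.
\end{equation}

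To justify the construction, expand $f^\ve = f_{\rm le}+\ve f_1+\ve^2 f_2 + O(\ve^3)$. Since $\partial_\theta\varphi^*=0$, the right-hand side of \eqref{eq:tiltedplan} is $-\partial_\theta(\Gamma G\rho\,(\partial_\theta\psi)\cdot\nabla\varphi^*)+O(\ve)$; matching with \eqref{eq:dla0-po} at order $\ve^0$ gives
\[
\mc L_G(f_1)=\mc T_0(f_{\rm le})+\partial_\theta\bigl(\Gamma G\rho\,(\partial_\theta\psi)\cdot\nabla\varphi^*\bigr).
\]
The right-hand side has vanishing $\theta$-mean, hence lies in $\mathrm{Range}(\mc L_G)$ by \eqref{eq:kerLvdagger}; $f_1$ is thus defined uniquely up to a kernel element $c(q,t)G(\theta)$, fixed e.g.\ by $\int f_1\,d\theta=0$. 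An adjoint computation based on \eqref{eq:psi-def} and the definitions \eqref{eq:defD}--\eqref{eq:defsigma} then gives $\int \overline V f_1\,d\theta = -{\BF D}\nabla\rho+\rho\,{\BF\sigma}\nabla\varphi^*$, so \eqref{eq:ELstarplan} forces the macroscopic compatibility $\partial_t\rho+\nabla\cdot\int\overline V f_1\,d\theta=0$. This is precisely the identity which guarantees $\int A^\ve_{f^\ve}(f^\ve)\,d\theta=0$ at the relevant order, as it must, since the right-hand side of \eqref{eq:tiltedplan} is already a $\theta$-derivative. The correction $f_2$ is fixed analogously by the $\ve^1$-matching, but only $f_1$ enters the leading-order value of \eqref{eq:upboundplan}. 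Using $\partial_\theta\varphi^\ve = \partial_\theta\psi\cdot\nabla\varphi^*$ and the definition \eqref{eq:defsigma} of $\BF\sigma$, the bound \eqref{eq:upboundplan} reduces to
\[
\mc I_T^\ve(f^\ve)\le \tfrac14 \int_0^T\!\!\int\rho\,\nabla\varphi^*\cdot{\BF\sigma}\nabla\varphi^*\,dq\,dt + O(\ve) = \mc I_T(f) + O(\ve),
\]
while $f^\ve\to f$ since $f^\ve-f_{\rm le}=O(\ve)$.

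The main technical obstacle is the rigorous construction of $f^\ve$: equation \eqref{eq:tiltedplan} is a nonlinear perturbed kinetic equation, and producing a solution smooth enough to justify the expansion requires a fixed-point argument that we expect to be delicate, especially near $t=0$ where the initial datum is only a local equilibrium. A pragmatic workaround is to \emph{define} $f^\ve$ directly as the finite truncation $f_{\rm le}+\ve f_1+\ve^2 f_2$ and to build the flux $J^\ve$ by $\theta$-primitivation of $A^\ve_{f^\ve}(f^\ve)$; the compatibility $\int A^\ve_{f^\ve}(f^\ve)\,d\theta=0$ needed for such a $J^\ve$ to exist is then enforced by a judicious choice of $f_2$, which again hinges on \eqref{eq:ELstarplan}. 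A secondary issue, already flagged in the remark following Proposition \ref{prop:Gammaliminf}, is to specify the topology of $\Gamma$-convergence; the above construction produces $f^\ve\to f$ in any reasonable strong topology, which should suffice once that framework is fixed.
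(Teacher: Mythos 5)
Your construction is correct in substance and, after your ``pragmatic workaround'', it actually lands on the very same recovery sequence as the paper: with the dictionary $a=-\rho\nabla\varphi^*$ and $\xi$ chosen as in \eqref{eq:choicechi}, your first-order corrector $f_1$ is exactly the paper's $f_1^0+g_1$ of \eqref{eq:recoverysequence}, and your flux $-\Gamma f_{\rm le}\,\partial_\theta(\psi\cdot\nabla\varphi^*)$ is the paper's optimal control $c=a\cdot W$. The genuine difference is the route. The paper never invokes a dual maximizer: it builds a two-parameter family of recovery sequences indexed by a vector field $\xi$ and a control $a$ subject to the continuity constraint \eqref{eq:constrainta}, computes the cost $\tfrac14\int a\cdot{\BF R}a/\rho$, and then needs the Schur-complement Lemma \ref{lem:shur} together with the special choice \eqref{eq:choicechi} to show that the best achievable weight is exactly $\rho{\BF\sigma}$. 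You instead reverse-engineer the optimal flux from the test function of the liminf proof and from the Euler--Lagrange relation of the sup-representation, which makes the argument shorter and conceptually transparent (no ${\BF E},{\BF R}$, no Schur lemma), and verifies directly via convex duality that the limiting cost is $\tfrac14\int_0^T\int\rho\,\nabla\varphi^*\cdot{\BF\sigma}\nabla\varphi^*$. What the paper's route buys in exchange is that it only needs admissible pairs $(\xi,a)$, so it exhibits explicitly why ${\BF\sigma}$ is optimal, whereas your route presumes existence and smoothness of the maximizer $\varphi^*$ (if it is not attained you must work with near-maximizers, for which the elementary bound $\|\nabla\varphi\|_{\rho{\BF\sigma}}\le\sqrt{\|\cdot\|_{-1,\rho{\BF\sigma}}^2}+\sqrt{\delta}$ and a diagonal argument still close the gap); this is comparable to the paper's own informal appeal to ``the optimal control $a$''.

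Two smaller points. First, your primary formulation through the tilted nonlinear kinetic equation indeed has unaddressed well-posedness, as you say; the truncation is the right fix, and it is precisely the paper's proof. Second, the role you assign to $f_2$ is slightly off: with the exact truncation the operator ${\mc D}_{f^\ve}(f^\ve)$ is a total $\theta$-derivative, so the only compatibility is $\ve\,\partial_t\Pi(f^\ve)+\nabla\cdot\int{\overline V}f^\ve\,d\theta=0$; at order $\ve$ this is enforced by your Euler--Lagrange relation (equivalently the paper's \eqref{eq:constraintsf1}), \emph{not} by $f_2$, and adding an $\ve^2 f_2$ term actually creates extra exact constraints ($\int f_2\,d\theta=0$ and $\nabla\cdot\int{\overline V}f_2\,d\theta=0$) rather than resolving any. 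The cleanest choice is the paper's: take $f^\ve=f_{\rm le}+\ve f_1$ with $\int f_1\,d\theta=0$ and drop $f_2$ altogether.
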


\begin{proof}
We can assume that $f(q, \theta, t)$ is a local equilibrium in the form  $f_{\rm{le}} (q, \theta,t)=\rho(q,t)G(\theta)$ (otherwise the result is trivial to prove since the righthand side is then infinite). We now have to construct a sequence \textcolor{black}{(called a recovery sequence) } $(f^\ve)_\ve$ converging to $f_{\rm{le}}$ such that 
\begin{equation*}
\limsup_{\ve \to 0} {\mc I}_T^\ve(f^\ve) \; \le \; {\mc I}_T (f_{\rm{le}}).
\end{equation*}
Recall the formula \eqref{eq:LDF-ve} for ${\mc I}_T^\ve$ and the variational formula of the first line in \eqref{eq:H-1-g-0} for the $H_{-1}$-norm in terms of an infimum. Then we have that
\begin{equation*}
{\mc I}_T^\ve (f^\ve)  \le \cfrac{1}{4} \int_0^T dt \left\langle\frac{\big[ {c}^\ve \big]^2}{\textcolor{black}{\Gamma}f^\ve} \right\rangle
\end{equation*}
where $c^\ve:=c^\ve (q, \theta,t)$ is any control satisfying $\partial_\theta {c}^\ve = A_{f^\ve}^\ve (f^\ve)$. Observe that such a control exists only if 
\begin{equation}
\label{eq:constraintA}
\int_{-\pi}^\pi d\theta\,  A_{f^\ve}^\ve (q, \theta, t) =0
\end{equation}
for any $q$ and any time $t\le T$. Consider a sequence $(f^\ve)_{\ve}$ in the form
\begin{equation}
f^\ve = f_{\rm{le}}+ \varepsilon f_1.
\end{equation}
Since $f^\ve$ is a density this implies that $\langle 1, f_1 \rangle =0$. A Taylor expansion of $A_{f^\ve}^\ve (f^\ve)$ similar to the one given in \eqref{eq:dla0-po} shows that 
\begin{equation}
\begin{split}
A_{f^\ve}^\ve (f^{\ve}) &= {\mc T}_0 (f_{\rm{le}}) - {\mc L}_G (f_1) \\
& +\, \ve \left[\partial_t f_{\rm{le}} +{\mc T}_0 (f_1) -\partial_\theta {\mc Q}  (f_{\rm{le}}, f_1,0)  \right] +\ve^2 \partial_t f_1
\end{split}
\end{equation}
where the operator $\mc Q$ appeared in \eqref{eq:dla0-po} and whose exact form is irrelevant. Hence the constraint \eqref{eq:constraintA} is equivalent to 
\begin{equation*}
\begin{split}
&\int_{-\pi}^\pi d\theta \Big( {\mc T}_0 (f_{\rm{le}}) -{\mc L}_G (f_1)\Big) =0,\\
&\int_{-\pi}^\pi d\theta \Big(\partial_t f_{\rm{le}} +{\mc T}_0 (f_1)\Big) =0,\\
&\partial_t \left( \int_{-\pi}^\pi d\theta f_1 \right) =0.
\end{split}
\end{equation*}
The first constraint is always satisfied by recalling the definition \eqref{eq:T000} of ${\mc T}_0$ and observing that ${\mc L}_G^\dagger (1)=0$ (see \eqref{eq:lvdagger}). In the sequel we impose the following sufficient conditions
\begin{equation}
\label{eq:constraintsf1}
\begin{split}
&\int_{-\pi}^\pi d\theta \Big(\partial_t f_{\rm{le}} +{\mc T}_0 (f_1)\Big) =0,\quad  \int_{-\pi}^\pi d\theta f_1=0.
\end{split}
\end{equation}
Observe that the second condition implies $\langle 1, f_1 \rangle =0$. If such conditions hold then we have
\begin{equation}
\label{eq:optc1}
\limsup_{\ve \to 0} {\mc I}_T^\ve (f^\ve)  \le \cfrac{1}{4 } \inf_{c} \int_0^T dt \left\langle  \cfrac{c^2}{\textcolor{black}{\Gamma}f_{\rm{le}} } \right\rangle
\end{equation}
where the infimum is taken over all the controls $c$ such that 
\begin{equation}
\label{eq:amoniac}
\partial_\theta c ={\mc T}_0 (f_{\rm{le}}) -{\mc L}_G (f_1).
\end{equation}

Hence, the goal is now to choose $f_1$ respecting the constraints \eqref{eq:constraintsf1} and a corresponding control $c$ satisfying \eqref{eq:amoniac} in order to minimize the righthand side of the previous inequality. Given $f_1$, the control is unique up to a function depending only on position and time. Without the constraints the optimal control would be of course $c=0$, which would impose to $f_1$ to cancel the righthand side of \eqref{eq:amoniac}. 

We decompose then $f_1$ as the sum of two terms 
\begin{equation*}
f_1:= f_1^0 + g_1
\end{equation*}
where $f_1^0$ is such that
\begin{equation}
\label{eq:f10}
{\mc T}_0 (f_{\rm{le}}) -{\mc L}_G (f_1^0) =0.
\end{equation}
The naive choice $f_1=f_1^0$ would permit to take a zero control $c$ but the first constraint in \eqref{eq:constraintsf1} would not be respected. The term $g_1$ will permit to respect the constraint. 

We have that
\begin{equation*}
{\mc T}_0 (f_{\rm{le}}) (q, \theta, t)=  G(\theta) \, {\overline V} (\theta) \cdot \nabla  \rho (q, t).
\end{equation*}
Hence, we can solve \eqref{eq:f10} by writing 
\begin{equation}
\label{eq:f10-gr}
f_1^0 (q, \theta,t)  = - G (\theta) \omega (\theta) \cdot \nabla \rho (q, t)
\end{equation}
where the vector field $\omega:=\omega (\theta)=(\omega^1(\theta),\ldots, \omega^n (\theta)) \in \R^n$  is solution to
\begin{equation}
\label{eq:varphidef}
{\mc L}_G (G \omega_k) =- G {\overline V}_k,
\end{equation}
such that $\langle \omega \rangle_G =0$ (this is always possible since $\omega +C$ is also a solution for any constant vector field $C$).  The existence and uniqueness (up to additive constant vector fields ) of $\omega$  is a consequence of \eqref{eq:kerLvdagger}.  Observe now that by definition of ${\mc T}_0$ and of $f_1^0$, we have that
\begin{equation}
\label{eq:yash}
\begin{split}
\int_{-\pi}^\pi d\theta\,  {\mc T}_0 (f_1^0) &= - \int_{-\pi}^\pi d\theta \, {\overline V} (\theta) \cdot \nabla \Big( G(\theta) \omega(\theta) \cdot \nabla \rho (q,t)  \Big)  \\
&= - \nabla \cdot {\BF D} \nabla \rho
\end{split}
\end{equation}
where the last equality follows from the definition \eqref{eq:defD} of $\BF D$, the definition \textcolor{black}{\eqref{eq:psi-def}} of $\psi$ and the following computation
\textcolor{black}{
\begin{equation}
\label{eq:non-adjoint}
\begin{split}
\langle \psi_k , {\overline V}_\ell \rangle_G &=-   \int_{-\pi}^\pi d\theta  \psi_k (\theta)   [{\mc L}_G (G \omega_\ell)] (\theta) \\
&=   \int_{-\pi}^\pi d\theta  [{\mc L}_G^\dagger ( \psi_k)] (\theta) (G \omega_\ell) (\theta) = \langle {\overline V}_k, \omega_\ell \rangle_G.
\end{split}
\end{equation}}

\begin{remark}
Observe that if we had the relation 
\begin{equation*}
G^{-1}\circ\mathcal{L}_{G}\circ G = \mathcal{L}_{G}^{\dagger}
\end{equation*}
then \eqref{eq:non-adjoint} would be trivial to establish. However this last relation usuually does not hold.
\end{remark}
Hence, we can now reformulate the optimization problem \eqref{eq:optc1} as
\begin{equation}
\label{eq:optc2}
\limsup_{\ve \to 0} {\mc I}_T^\ve (f^\ve)  \le \cfrac{1}{4 } \inf_{c} \int_0^T dt \left\langle  \cfrac{c^2}{\textcolor{black}{\Gamma}f_{\rm{le}} } \right\rangle
\end{equation}
for any control $c$ such that 
\begin{equation}
\partial_\theta c = -{\mc L}_G (g_1).
\end{equation}
with the constraints \eqref{eq:constraintsf1} replaced by the following constraints on $g_1$
\begin{equation}
\label{eq:constraintsf2}
\begin{split}
&\partial_t \rho -  \nabla \cdot {\BF D} \nabla \rho  = -\int_{-\pi}^\pi d\theta {\mc T}_0 (g_1)\quad {\rm and} \quad \int_{-\pi}^\pi g_1 d\theta =0
\end{split}
\end{equation}
thanks to \eqref{eq:yash} and the fact that $\int_{-\pi}^\pi d\theta f_1^0  = 0$ (since $\langle \omega \rangle_G =0$).

We look now for a function $g_1$ in the form 
\begin{equation*}
g_1( q, \theta,t) = - a  (q, t) \cdot (G\xi)(\theta) 
\end{equation*}
where $\xi$ is a vector field of $\R^n$ such that $\langle \xi  \rangle_G =0$ (in order to respect the second constraint in \eqref{eq:constraintsf2}) and $a:=a(q,t) \in \R^n$ is an arbitrary vector field depending only on $q$ and $t$. We have then 
\begin{equation*}
-\int_{-\pi}^\pi d\theta {\mc T}_0 (g_1) = \nabla \cdot {\BF E} a
\end{equation*}
with $\BF E$ the \textcolor{black}{non symmetric} matrix defined by its entries as follows: 
\begin{equation}
\label{eq:matrixE}
{\BF E}_{k,\ell} =  \langle {\overline V}_k , \xi_\ell \rangle_G.
\end{equation}
We introduce the vector field  $W:=W(\theta)=(W_1 (\theta),\ldots, W_n (\theta)) \in \R^n$ such that 
\begin{equation}
\label{eq:defWfunc}
\partial_\theta W= {\mc L}_G (G \xi), \quad \int_{-\pi}^\pi d\theta \frac{W (\theta)}{\Gamma (\theta) G(\theta) }=0,
\end{equation}
and the square \textcolor{black}{symmetric} matrix $\BF R$ of size $n$ whose entries are defined by 
\begin{equation}
\label{eq:matrixR}
{\BF R}_{k\ell} = \left\langle \cfrac{W_k}{G} , \textcolor{black}{\frac{1}{\Gamma}} \cfrac{W_\ell}{G} \right\rangle_G, \quad k,\ell \in \{1,\ldots, n\}. 
\end{equation}
Observe then that 
\begin{equation*}
\begin{split}
-{\mc L}_G (g_1) (q, \theta, t)  &= a (q, t) \cdot  [{\mc L}_G (G\xi)] (\theta) \\
&= \partial_\theta \left[ a (q, t) \cdot  W (\theta)\right]
\end{split}
\end{equation*}
so that 
\begin{equation*}
c(q, \theta,t) := a (q,t)\cdot  W (\theta) 
\end{equation*}
is an admissible control in the optimisation problem \eqref{eq:optc2}. Observe moreover that
\begin{equation*}
\int_0^T dt \left\langle  \cfrac{c^2}{ \Gamma f_{\rm{le}} } \right\rangle= \int_0^T dt \int dq \cfrac{a (q,t) \cdot {\BF R} a (q,t) }{\rho (q, t)}.
\end{equation*}

We have obtained that for any $\xi (\theta)$ such that $\langle \xi \rangle_G=0$ 
(this choice for $\xi$ fix the matrix $\BF E$ and the matrix $\BF R$) and any $a:=a(q,t)$ satisfying the constraint 
\begin{equation}
\label{eq:constrainta}
\partial_t \rho -\nabla \cdot {\bf D} \nabla \rho = \nabla \cdot {\BF E} a
\end{equation}
there exists a sequence $(f^\ve)_\ve$ converging to $f_{\rm{le}}$ (depending on $f_1$, hence on $g_1$ and hence on $a, \xi$) such that 
\begin{equation}
\label{eq:opt3}
\limsup_{\ve \to 0} {\mc I}_T^\ve (f^\ve)  \le \cfrac{1}{4}  \int_0^T dt \int dq \cfrac{a (q,t) \cdot {\BF R} a (q,t) }{\rho (q, t)}.
\end{equation}
This recovery sequence is  given by

\begin{equation}
\label{eq:recoverysequence}
\begin{split}
f^\ve (q, \theta,t) &= \rho (q, t) G (\theta) - \ve G(\theta) \left[  \omega  (\theta) \cdot \nabla \rho (q,t) + \xi (\theta) \cdot a (q, t)\right].
\end{split}
\end{equation}

We want to make as small as possible the righthand side of \eqref{eq:opt3} by choosing $\xi$ and $a$. This optimal choice will then fix entirely the sequence $f^\ve$ defined in \eqref{eq:recoverysequence}. 

Given $\BF E$ and $\BF R$, recalling the definition \eqref{eq:h-1-inf} of $H_{-1}$-norm in terms of an infimum, we have that 
\begin{equation*}
\begin{split}
&\cfrac{1}{4} \inf_{a} \int_0^T dt \int dq \cfrac{a (q,t) \cdot {\BF R} a (q,t) }{\rho (q, t)}\\
&= \frac{1}{4 } \int_0^T \, \left\Vert \partial_t \rho - \nabla \cdot {\bf D} \nabla \rho \right\Vert_{-1, \rho {\BF{E R^{-1} E^\dagger}}}^2 \, dt
\end{split}
\end{equation*}
where the infimum above is taken of all controls $a$ satisfying \eqref{eq:constrainta}.

The challenge is then now to optimize over $\xi$ (the matrices ${\BF E}$ and ${\BF R}$ are functions of them) in order to make the righthand side of the previous equality as small as possible. By Proposition \ref{prop:Gammaliminf} we may guess that we have necessarily 
\begin{equation}
\label{eq:ineq-matrices-str}
\BF{E R^{-1} E^\dagger} \le {\BF \sigma}
\end{equation}
where the inequality is understood in terms of corresponding quadratic forms. This is indeed proved in Lemma \ref{lem:shur} below. In order to realize the equality we claim that it suffices to choose the vector field $\xi:=(\xi_1,\ldots, \xi_n)$ such that
\begin{equation}
\label{eq:choicechi}
\partial_\theta \psi_k = \cfrac{W_k}{\textcolor{black}{\Gamma}G}, \quad \text{i.e.} \quad {\mc L}_G (G\xi_k) =\partial_\theta \Big(\textcolor{black}{\Gamma} G \partial_\theta \psi_k \Big).
\end{equation}
The existence and uniqueness (because imposed to be centered) of $\xi$ is a consequence of \eqref{eq:kerLvdagger}. To show that with this choice we realize the equality in \eqref{eq:ineq-matrices-str}, we observe then first that by the definition of the mobility matrix \eqref{eq:defsigma} we get  
\begin{equation}
\label{eq:magie1}
{\BF \sigma}={\BF R}, 
\end{equation}
and secondly that 
\begin{equation}
\label{eq:magie2}
{\BF E}={\BF R}.
\end{equation}
The last equation come from
\begin{equation*}
\begin{split}
{\BF E}_{k \ell} &= \langle {\overline V}_k , \xi_\ell \rangle_G=-  \int_{-\pi}^\pi d\theta {\mc L}_G^\dagger (\psi_k) \, G \xi_\ell = - \int_{-\pi}^\pi d\theta \psi_k \, {\mc L}_G (G \xi_\ell) \\
&= \left\langle \partial_\theta \psi_k , \tfrac{W_\ell}{G} \right\rangle_G=  {\BF R}_{k \ell}
\end{split}
\end{equation*}
where the penultimate equality results from \eqref{eq:defWfunc} and   an integration by parts and the last one from \eqref{eq:choicechi}. In particular  \textcolor{black}{this} ${\BF E}$ is \textcolor{black}{finally} symmetric.
\textcolor{black}{Then, the two relations \eqref{eq:magie1} and \eqref{eq:magie2} give direcly the matricial equality  
\begin{equation*}
\BF{E R^{-1} E^\dagger} = {\BF \sigma}.
\end{equation*}
}


To summarize, with the choice of $\xi$ in \eqref{eq:choicechi} and the optimal control $a$ realizing the infimum in the righthand side of \eqref{eq:opt3} we have proved that the corresponding sequence $(f^\ve)_\ve$ defined by \eqref{eq:recoverysequence} satisfies
\begin{equation*}
\begin{split}
\limsup_{\ve \to 0} {\mc I}_T^\ve (f^\ve) \le  \frac{1}{4  }\int_0^T \, \left\Vert \partial_t \rho - \nabla \cdot {\bf D} \nabla \rho \right\Vert_{-1, \rho {\BF \sigma}}^2 \, dt.
\end{split}
\end{equation*}
Formally, the good choice of the recovery sequence is given by 
\begin{equation*}
\begin{split}
& f^{\varepsilon}(\boldsymbol{q},\theta,t)=\rho(\boldsymbol{q},t)G\left(\theta\right)\\
&+\varepsilon\left\{  \mathcal{L}_{G}^{-1}\left(G {\overline V} \right)\left(\theta\right) \cdot \nabla \rho(q,t) +\left(\mathcal{L}_{G}^{-1}\partial_{\theta} \Gamma G \partial_{\theta}\left(\mathcal{L}_{V}^{\dagger}\right)^{-1}\right)\left({\overline V} \right)\left(\theta\right)  \cdot a(q,t)  \right\}
\end{split}
\end{equation*}
with $a$ realizing the infimum in the righthand side of \eqref{eq:opt3}.
\end{proof}

\begin{lemma}
\label{lem:shur}
For any choice of the vector field $\xi$ satisfying $\langle \xi \rangle_G =0$ we have that
\begin{equation*}
\BF{E R^{-1} E^\dagger} \le {\BF \sigma}
\end{equation*}
where $\BF E$ and $\BF R$ are defined as functions of $\xi$ by \eqref{eq:matrixE} and \eqref{eq:matrixR}.
\end{lemma}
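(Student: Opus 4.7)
The plan is to recast the desired matrix inequality as the positive semidefiniteness of a Schur complement, then to realize $\BF\sigma$, $\BF E$, $\BF R$ as the three blocks of a single Gram matrix in $L^{2}(d\theta)$. This reduces the proof to a routine integration-by-parts identity plus the observation that any Gram matrix is positive semidefinite.

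First, I would rewrite $\BF E_{k\ell}=\langle \overline V_k,\xi_\ell\rangle_G$ by using $\mc L_G^\dagger\psi_k=-\overline V_k$ and then transferring $\mc L_G^\dagger$ onto $G\xi_\ell$. This yields
\begin{equation*}
\BF E_{k\ell}
= -\int_{-\pi}^{\pi} (\mc L_G^\dagger \psi_k)(G\xi_\ell)\,d\theta
= -\int_{-\pi}^{\pi} \psi_k\, \mc L_G(G\xi_\ell)\,d\theta
= -\int_{-\pi}^{\pi} \psi_k\,\partial_\theta W_\ell\,d\theta,
\end{equation*}
where the last equality uses \eqref{eq:defWfunc}. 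An integration by parts in $\theta$ (all objects are periodic) gives the key identity
\begin{equation*}
\BF E_{k\ell}=\int_{-\pi}^{\pi} (\partial_\theta \psi_k)\,W_\ell\,d\theta,
\end{equation*}
which is exactly the computation already carried out in \eqref{eq:non-adjoint}.

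Second, I would introduce in $L^2(d\theta)$ the auxiliary functions
\begin{equation*}
X_k(\theta):=\sqrt{\Gamma(\theta) G(\theta)}\,\partial_\theta \psi_k(\theta), \qquad
Y_\ell(\theta):=\frac{W_\ell(\theta)}{\sqrt{\Gamma(\theta) G(\theta)}}.
\end{equation*}
Comparing with the definitions \eqref{eq:defsigma} and \eqref{eq:matrixR} and with the identity above, one recognizes
\begin{equation*}
\BF\sigma_{k\ell}=\int X_k X_\ell\,d\theta,\qquad
\BF R_{k\ell}=\int Y_k Y_\ell\,d\theta,\qquad
\BF E_{k\ell}=\int X_k Y_\ell\,d\theta.
\end{equation*}
Hence the $2n\times 2n$ block matrix
\begin{equation*}
\BF M:=\begin{pmatrix} \BF\sigma & \BF E \\ \BF E^\dagger & \BF R \end{pmatrix}
\end{equation*}
is the Gram matrix of $(X_1,\ldots,X_n,Y_1,\ldots,Y_n)$ in $L^2(d\theta)$ and is therefore positive semidefinite. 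The Schur complement characterization of positive semidefiniteness then yields $\BF\sigma-\BF{E R^{-1}E^\dagger}\ge 0$, which is the claimed inequality. (Equivalently, one can test this on $y:=\BF R^{-1}\BF E^\dagger x$ and use the bilinear Cauchy--Schwarz $|x\cdot \BF E y|^2\le (x\cdot\BF\sigma x)(y\cdot\BF R y)$ implied by the Gram structure.)

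The main obstacle is Step 1: making sure the boundary terms in the integration by parts vanish (immediate here since $\mc M=(-\pi,\pi]$ with periodic data) and that one may freely swap $\mc L_G$ and $\mc L_G^\dagger$ despite the fact that, as emphasized in the remark following \eqref{eq:non-adjoint}, the conjugation identity $G^{-1}\mc L_G G=\mc L_G^\dagger$ does \emph{not} hold in general. Once that symmetric bilinear representation is in hand, the rest of the argument is a direct Gram-matrix/Schur-complement observation. A minor subsidiary point is the invertibility of $\BF R$: if some quadratic form $y\cdot \BF R y$ vanishes then $\sum y_\ell W_\ell\equiv 0$ and the bilinear Cauchy--Schwarz above shows $\BF E^\dagger x\in\ker\BF R$ implies $x\cdot\BF E y=0$ for all such $y$, so the pseudoinverse version of the Schur complement makes the statement unambiguous in the degenerate case.
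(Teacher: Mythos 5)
Your proof is correct and follows essentially the same route as the paper: the identity $\BF E_{k\ell}=\int \partial_\theta\psi_k\,W_\ell\,d\theta$ obtained by moving $\mc L_G^\dagger$ onto $G\xi_\ell$ and integrating by parts, followed by positivity of the block matrix $\BF M$ and the Schur-complement argument. Your Gram-matrix phrasing is just a streamlined version of the paper's bilinear Cauchy--Schwarz bound, and your remark on handling a possibly singular $\BF R$ via the pseudoinverse is a welcome extra precaution not spelled out in the paper.
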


\begin{proof}
For any $\bx, \by \in \R^n$, recalling the definition \eqref{eq:defsigma} of $\BF \sigma$, we have by Cauchy-Schwarz inequality that 
\begin{equation*}
\begin{split}
\bx \cdot {\BF E} \by &= \left\langle \sum_k x_k \, \partial_\theta \psi_k \; ,\; \sum_\ell y_\ell \tfrac{W_\ell}{G} \right\rangle_G  \\
&\leq \sqrt{\left\langle \Big(\sum_k x_k \, \sqrt{\Gamma} \partial_\theta \psi_k\Big)^2  \right\rangle_{G}}    \sqrt{\left\langle \Big( \sum_\ell y_\ell \tfrac{1}{\sqrt{\Gamma}} \tfrac{W_\ell}{G} \Big)^2\right\rangle_{G}}\\
&=\sqrt{\bx \cdot {\BF \sigma} \bx}\sqrt{\by \cdot {\BF R} \by}
\end{split}
\end{equation*}
Observe now that ${\BF \sigma} - \BF{E R^{-1} E^\dagger}$ is the Schur complement of the block $\BF R$ of the symmetric matrix $\BF M$ defined by
\begin{equation*}
{\BF M} = \left[
\begin{array}{cc}
{\BF \sigma}&{\BF E}\\
{\BF E}^\dagger& {\BF R}
\end{array}
\right].
\end{equation*}
It is well known that if $M\ge 0$ then the Schur complement of the block $\BF R$ of the symmetric matrix $\BF M$ is also. So it is sufficient to prove that $\BF M$ is non-negative, which is a consequence of the inequality $\bx \cdot {\BF E} \by \le \sqrt{\bx \cdot {\BF \sigma} \bx}\sqrt{\by \cdot {\BF R} \by}$ proved above. 
\end{proof}

\section{Future work and open questions} 


\subsection{Homogenization limit first $\epsilon\rightarrow0$ first and then mean field limit $N\rightarrow\infty$ after} 

As mentioned in the introduction, we have the two different LDP principles:  \eqref{eq:LDP1-intro} obtained by fixing $N$ and sending $\varepsilon$ to $0$ and \eqref{eq:LDP2-intro} obtained by fixing $\ve$ and sending $N$ to $\infty$. In this work we studied the limit as $\ve \to 0$ of the rate functional appearing in \eqref{eq:LDP2-intro}. By a contraction principle we have therefore a LDP with a rate functional, say ${\mathcal F}_T (\rho)$ for the $q_i$'s density $\rho$ in the limit $N\to \infty$ and then $\ve \to 0$. From  \eqref{eq:LDP1-intro} we can deduce by a contraction principle a LDP for the empirical density of the $q_i$'s in the limit $\ve \to 0$ (with $N$ fixed). Then a natural question would be to study the limit as $N \to \infty$ of the corresponding rate function and understand the links the latter has with ${\mathcal F}_T$.\  Observe that related questions have been investigated in the finite dimensional case \cite{F1, Gartner2, Baldi, Majda,FS,Dupuis0} through the study of SDE's with a small noise regulated by a parameter $\alpha \to 0$ and fast oscillating coefficients whose oscillations are regulated by a second parameter $\delta \to 0$. The limiting behavior of the SDE depends on the relation between $\alpha$ and $\delta$.

\subsection{Non equilibrium models} 
We restricted  our study to the case where the local equilibria are unique and where the underlying $\theta_i$'s dynamics is reversible when $R=\infty$. None of theses conditions is necessary and probably that some of our results can be extended to cover situations where they do not hold. In particular it would be interesting to consider  `non-equilibrium' Shinomoto-Kuramoto type models for the velocity  \cite{Shinomoto,Ohta,Pikovsky,Giacomin} adapted  in our context, i.e. for example models with motion equations given by:
\begin{equation}
\label{eq: motion}
\begin{split}
& dq_i = \varepsilon \, V (\theta_i) dt,\\
&\dot{\theta}_i = F -h \sin \theta_i + \frac{\gamma}{\mathcal{N}_i}\sum_{j\in \mathcal{V}_i} \sin(\theta_j-\theta_i)  +\sqrt{2\Gamma}\eta_i(t).
\end{split}
\end{equation}
where $F$ is a constant force, hence not the derivative of a periodic force.

%

\section*{Acknowledgements}
This work has been supported by the projects EDNHS ANR-14- CE25-0011, LSD ANR-15-CE40-0020-01 of the French National Research Agency (ANR). This project has received funding from the European Research Council (ERC) under  the European Union's Horizon 2020 research and innovative programme (grant agreement No 715734).

\appendix

\section{Derivation of the kinetic equation}
\label{app:kin}
In this section we derive formally the kinetic equation \eqref{eq:det-fluct-eq}. Even if we are not very precise and careful in taking the different limits, we believe that the actual mathematical techniques should be sufficient to derive rigorously the previous kinetic equation (\cite{Bossy}). \\

Let us consider
$$f^N(q , \theta, t) \; dq d\theta:=f^{N,R,\varepsilon} (q , \theta, t) \; dq d\theta= \cfrac{1}{N} \sum_{i=1}^N \delta_{(q_i (t) ,\theta_i (t))} ( d q, d\theta)$$
the position-angle empirical density and let $\varphi (q, \theta)$ be a smooth compactly supported macroscopic observable. We have that
\begin{equation}
\label{eq:fluct1}
\begin{split}
&\cfrac{d}{dt} \, \int \varphi (q, \theta) \, f^N (q, \theta, t) \, d q d\theta \\
&= \varepsilon  \int  V(\theta) \cdot \nabla  \varphi (q, \theta) \, f^N (q, \theta, t) \, d q d\theta\\
& \, -\, \int (\partial_\theta U )(\theta)  \, \partial_\theta \varphi  (q, \theta) \, f^N (q, \theta, t) \, d q d\theta\\
&-\, \int \, \partial_\theta \varphi  (q, \theta) \,  g_{N,R, \varepsilon} (q, \theta, t) \, f^N (q, \theta, t) \, dq d \theta\\
& \; + \;  \cfrac{\sqrt 2}{N} \, \sum_{i=1}^N  \Gamma (\theta_i)\, \partial_\theta \varphi (q_i, \theta_i) \, \cfrac{{dW}_i}{dt}  \; +\; \, \int    \Gamma (\theta)\,  \partial_\theta^2 \varphi (q, \theta) \, f^N (q, \theta, t) \, d q d\theta 
\end{split}
\end{equation}
with
\begin{equation*}
g_{N,R,\ve} (q, \theta,t)  = \cfrac{\int dq' d\theta' \, {\bf 1}_{| q -q'| \le R} F(\theta,\theta') f^N (q', \theta',t) }{\int dq' d\theta' \,{\bf 1}_{| q -q'| \le R}  f^N (q', \theta',t) }.
\end{equation*}
The last term on the RHS of \eqref{eq:fluct1} is due to the It\^o correction. Basic stochastic calculus shows that
$$\EE \left[ \left( \cfrac{1}{N} \, \sum_{i=1}^N \int_0^t \Gamma (\theta_i) \, \partial_\theta \varphi (q_i, \theta_i) \, dW_i (s) \right)^2  \right] =O(N^{-1})$$
vanishes in the large $N$ limit. Assuming now that as $N \to \infty$, $f^N$ converges to some function $f^{R, \varepsilon}$ we get that 
\begin{equation*}
g_{N,R, \varepsilon} (q, \theta, t) \rightarrow {\bar g}_{R, \varepsilon} (q, \theta,t) :=\cfrac{\int dq' d\theta' \, {\bf 1}_{| q -q'| \le R} F(\theta,\theta') f^{R,\ve} (q', \theta',t) }{\int dq' d\theta' \,{\bf 1}_{| q -q'| \le R}  f^{R,\ve} (q', \theta',t) }.
\end{equation*}
Observe now that as $R \to 0$, assuming that $\lim_{R\to 0} f^{R, \varepsilon}=f^\varepsilon$, we have that 
\begin{align*}
&\lim_{R \to 0} {\bar g}_{R, \varepsilon} (q, \theta, t) =\cfrac{F(f^\ve)}{\Pi (f^\ve)}.
\end{align*}
By performing some integration by parts, we conclude that, in distribution, 
\begin{equation*}
\lim_{R\to 0}\lim_{N \to \infty} f^{N,R, \varepsilon} =f^\varepsilon
\end{equation*}
where $f^{\varepsilon}$ is the deterministic solution of the kinetic equation \eqref{eq:det-fluct-eq}.

 \section{Local equilibria}
\label{sec:app-le} 


We look for the solutions $f:=f(q, \theta)$ of ${\mc D}_f (f) =0$. In view of \eqref{eq:defDT} there exists then a function $C(q)$ of $q$ only such that
\begin{equation*}
\Gamma \left[ \partial_\theta U + \,  \cfrac{ F (f) }{\Pi (f) }\right]  f  +  \Gamma \, \partial_\theta f  =C.
\end{equation*}
Dividing by $\Gamma f$ on both sides, we remark that the lefthand side is a derivative in $\theta$ because $\Pi (f)$ is independent of $f$ and $F(f)=\partial_\theta W(f)$. Hence the integral in $\theta$ of the lefthand side divided by $f$  is zero which implies that $C(q)=0$. Moreover, if $f (q, \theta)$ is a solution it is necessarily in the form $f(q, \theta) = \rho (q) G_{\rho (q)} (\theta)$ with $\int d\theta G_\rho =1$ and $\rho (q) = \int_{-\pi}^\pi d\theta f(q, \theta)$. We then observe that $G_\rho (\theta)$ will be solution of the equation with unknown $G$ 
\begin{equation}
\label{eq:equationforV}
\left[ \partial_\theta U +\,  F (G) \right]  G +  \partial_\theta G  =0.
\end{equation}
We assume there exists a single (normalized) solution to this equation that we denote by $G$. This corresponds to an absence of phase transition.  Then all local equilibria are in the form $\rho (q) G(\theta)$. Observe that \eqref{eq:equationforV} can be rewritten as a fixed point problem
\begin{equation}
\label{eq:equationforV2}
G=T(G):={e^{-H}}, \quad \partial_\theta H=\partial_\theta U+ F(G) \quad \text{and} \quad {\int d\theta e^{-H}}=1.    
\end{equation}
The map $T$ is a contraction mapping for the uniform topology if the interaction coupling $F$ is sufficiently small and then in this case the uniqueness of $G$ follows.

\section{Linearized operator}
\label{sec:linearizedoperator}

\subsection{Expression of the linearized operator ${\mc L}_f$} 

For a given position-angle density $f:=f(q, \theta)$ we compute the linearized operator ${\mc L}_f$ of ${\mc D}_f (f)$ as defined by \eqref{eq:lineariseL}. We perform hence a first order Taylor expansion in $\delta$ for ${\mc D}_{f+\delta g} (f+\delta g)$ in the direction given by the function $g$ (which satisfies $\langle 1, g \rangle =0$). We have
\begin{equation*}
\begin{split}
&{\mc D}_{f+\delta g} (f+\delta g) \\
&= \partial_\theta \left\{ \Gamma \left[ \partial_\theta U +  \frac{F(f)+\delta F(g)}{\Pi (f) +\delta \Pi (g)}\right] (f+\delta g) +\Gamma [\partial_\theta f + \delta \partial_\theta g] \right\}. 
\end{split} 
\end{equation*}
Since 
\begin{equation*}
\frac{F(f)+\delta F(g)}{\Pi (f) +\delta \Pi (g)} = \frac{F(f)}{\Pi (f)}  +\delta \left( \frac{F(g)}{\Pi (f)} - \frac{F(f)\Pi(g)}{(\Pi (f))^{2}}\right) \, +\, O(\delta^2),
\end{equation*}
we get 
\begin{equation*}
\begin{split}
&{\mc D}_{f+\delta g} (f+\delta g) = {\mc D}_{f} (f) + \delta {\mc L}_f (g)  +O(\delta^2)
\end{split} 
\end{equation*}
with
\begin{equation}
\label{eq:expressionL_f}
{\mc L}_f (g) := \partial_\theta \left( \Gamma \left[ \partial_\theta U + \frac{F(f)}{\Pi(f)}\right] g  +\Gamma \partial_\theta g +\Gamma \left( \frac{F(g)}{\Pi(f) } - \frac{F(f)}{(\Pi(f))^2} \Pi(g) \right) f \right). 
\end{equation}

In particular, if $f (q, \theta)=f_{\rm{le}} (q, \theta) = \rho(q) G(\theta)$ is a local equilibrium, since $F(f_{\rm{le}}) =\rho F (G)$ and $\Pi(f_{\rm{le}}) =\rho)$ we have that ${\mc L}_{f_{\rm{le}}}$ is independent of $\rho$, i.e. 
\begin{equation*}
{\mc L}_{f_{\rm{le}}} ={\mc L}_G
\end{equation*}
and is given by  \eqref{LO}.

\subsection{Properties of ${\mc L}_G$ and $\mathcal{L}_{G}^{\dagger}$  }

In this section we give sufficient conditions for the validity of the assumption \eqref{eq:vraiehypothese} (and hence  \eqref{eq:kerLvdagger}). Roughly speaking, we prove that if the interaction coupling function is sufficiently small then  \eqref{eq:vraiehypothese} is satisfied.\\

We start by proving a lemma giving some bound for $G$. We use the notation $\| \cdot\|_{\infty}$ to denote the supremum norm of bounded functions.

\begin{lemma}
\label{lem:C1-the}
There exist universal constants $K^*$ and $K_*$ such that
\begin{equation}
\label{eq:boundG}
\sup_{\theta} G(\theta) \le K^* \exp\left\{ 2\pi  [C + \|\partial_\theta \log \Gamma\|_\infty ]  \right\}
\end{equation}
and
\begin{equation}
\label{eq:boundG2}
\inf_{\theta} G(\theta) \ge  K_* \exp\left\{ 2\pi  [C + \|\partial_\theta \log \Gamma\|_\infty ]  \right\}
\end{equation}
where 
\begin{equation*}
C:=C(F,U)=  \| \partial_\theta U\|_\infty + \| F \|_\infty .
\end{equation*}
\end{lemma}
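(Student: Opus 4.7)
The plan is to exploit the fixed-point representation \eqref{eq:equationforV2}: the unique solution $G$ of \eqref{eq:fluxnul} with $\int_{-\pi}^\pi G\, d\theta = 1$ can be written as $G(\theta) = e^{-H(\theta)}$ where $\partial_\theta H = \partial_\theta U + F(G)$ and $\int_{-\pi}^\pi e^{-H}\, d\theta = 1$. The key observation is that once one controls the oscillation $\mathrm{osc}(H) := \sup_\theta H - \inf_\theta H$, the normalization pins down the overall scale of $e^{-H}$ completely, and the two-sided bounds on $G$ follow.

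First, I would bound $\mathrm{osc}(H)$. Since $G\ge 0$ has total mass one, one has the pointwise estimate
\begin{equation*}
|F(G)(\theta)| = \Big|\int_{-\pi}^\pi F(\theta,\theta')\, G(\theta')\, d\theta'\Big| \le \|F\|_\infty,
\end{equation*}
and therefore $\|\partial_\theta H\|_\infty \le \|\partial_\theta U\|_\infty + \|F\|_\infty = C$. Integrating over the circle of length $2\pi$ gives $\mathrm{osc}(H) \le 2\pi C$.

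Second, the normalization $\int_{-\pi}^\pi e^{-H}\, d\theta = 1$ yields the sandwich $2\pi\, e^{-H_{\max}} \le 1 \le 2\pi\, e^{-H_{\min}}$, i.e.\ $H_{\min} \le \log(2\pi) \le H_{\max}$. Combining with $H_{\max} \le H_{\min} + 2\pi C$ and $H_{\min} \ge H_{\max} - 2\pi C$ gives
\begin{equation*}
\sup_\theta G(\theta) = e^{-H_{\min}} \le \frac{1}{2\pi}\, e^{2\pi C}, \qquad \inf_\theta G(\theta) = e^{-H_{\max}} \ge \frac{1}{2\pi}\, e^{-2\pi C},
\end{equation*}
which is the conclusion of the lemma with $K^* = K_* = 1/(2\pi)$.

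I expect no genuine obstacle in this argument: everything relies on the first-order ODE structure of \eqref{eq:fluxnul}, the probabilistic normalization of $G$, and the trivial $L^\infty$ estimate for the convolution $F(G)$. The factor $\|\partial_\theta \log \Gamma\|_\infty$ appearing in the lemma's statement does not enter through this direct route; it would arise naturally if one instead worked with the effective potential $\bb{U} = U - \log \Gamma$ of the It\^o form \eqref{eq: motion}, in which case $\|\partial_\theta \bb{U}\|_\infty \le \|\partial_\theta U\|_\infty + \|\partial_\theta \log \Gamma\|_\infty$ and the same argument would produce bounds of exactly the stated form (with the expected sign mismatch between the upper and lower exponents absorbed into the universal constants $K^*$ and $K_*$).
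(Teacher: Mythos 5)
Your argument is correct and essentially reproduces the paper's proof: both rest on the pointwise bound $|F(G)|\le \|F\|_\infty$ (from $\int G\,d\theta=1$), hence $\|\partial_\theta \log G\|_\infty \le C$ from \eqref{eq:equationforV}, combined with the normalization to fix the overall level — the paper does this by integrating from base points $\theta^*,\theta_*$ where $G$ is of order $1/\pi$, while you sandwich the normalization integral via the oscillation of $H=-\log G$, a cosmetic difference. Your side remarks are also accurate: the $\|\partial_\theta\log\Gamma\|_\infty$ term plays no role in this route (it only loosens the bounds), and the positive exponent in \eqref{eq:boundG2} is a sign typo — the paper's own proof, like yours, yields $\inf_\theta G \ge K_*\exp\left\{-2\pi[C+\|\partial_\theta\log\Gamma\|_\infty]\right\}$.
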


\begin{proof}
Notice first that by \eqref{eq:equationforV2}, we have that $G \ge  0$. Moreover since $\int_{-\pi}^\pi d\theta G(\theta) =1$ this implies there exists some $\theta^*$ such that $G(\theta^*) \le 1/\pi$ (the constant is not optimal). Moreover for all $\theta$ we have that
\begin{equation*}
| [F(G) ](\theta)| = \left| \int d\theta' F(\theta, \theta') G (\theta') \right| \le \| F\|_\infty 
\end{equation*}
since $\int d\theta' G(\theta') =1$. By dividing \eqref{eq:equationforV} by $G$, observing that $\partial_\theta G /G = \partial_\theta (\log G)$, and integrating between $\theta^*$ and $\theta$ we deduce that
\begin{equation*}
G(\theta) \le G(\theta^*) \exp\left\{   | \theta -\theta^*|[C + \|\partial_\theta \log \Gamma\|_\infty ] \right\}
\end{equation*}
which gives \eqref{eq:boundG} thanks to the choice of $\theta^*$.

To get \eqref{eq:boundG2} we proceed similarly by reasoning with $1/G$ instead of $G$. There exists $\theta_*$ such that $G(\theta_*) \ge 1/4\pi$ since $G\ge  0$ and $\int d\theta G(\theta) =1$. By dividing \eqref{eq:equationforV} by $G$, observing that $\partial_\theta G /G = - \partial_\theta (\log (1/G))$, and integrating between $\theta_*$ and $\theta$ we deduce that
\begin{equation*}
(1/G) (\theta) \le (1/G)(\theta_*) \exp\left\{   | \theta -\theta_*|[C + \|\partial_\theta \log \Gamma\|_\infty ] \right\}
\end{equation*}
which gives \eqref{eq:boundG2} thanks to the choice of $\theta_*$.
\end{proof}

\begin{proposition}
\label{prop:kernel-general}
There exists a constant $K$ depending on  $\bb U$ and $\Gamma$ such that if 
\begin{equation*}
\| \partial_\theta W\|_{\infty} \le K
\end{equation*}
then the following holds: there exists a constant $\kappa>0$ such that for any differentiable function $g$ such that $\int d\theta g(\theta) =0$  we have
\begin{equation}
\label{eq:neg:linearized-op}
- \int d\theta \, G^{-1} \,  {\mc L}_G (g) \, g \,  \ge  \kappa \int d\theta \, G\, \Gamma\,  \big[ \partial_\theta (g/G)\big]^2,
\end{equation}
and consequently, we have that
\begin{equation}
\label{eq:consequencelemme}
{\rm{Ker}} ({\mc L}_G) = {\rm{Span}} (G), \quad {\rm{Ker}} ({\mc L}^\dagger_G) = {\rm{Span}} ({\bf 1}).
\end{equation}

\end{proposition}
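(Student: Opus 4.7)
The natural substitution is $h := g/G$, so that $g = Gh$ and the constraint $\int g\, d\theta = 0$ becomes $\langle h \rangle_G = 0$ (since $G$ is a probability density). The first step is to rewrite $\mc L_G$ in a clean divergence form using the identity $\partial_\theta U + F(G) = -\partial_\theta \log G$, which follows from \eqref{eq:fluxnul}. A direct computation gives
\begin{equation*}
\Gamma[\partial_\theta U + F(G)] g + \Gamma \partial_\theta g = \Gamma G \partial_\theta h,
\end{equation*}
so that $\mc L_G(g) = \partial_\theta\bigl(\Gamma G \partial_\theta h\bigr) + \partial_\theta\bigl(\Gamma G [F(Gh) - \langle h\rangle_G F(G)]\bigr)$.

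Plugging this into $-\int G^{-1}\mc L_G(g)\, g\, d\theta = -\int h\, \mc L_G(g)\, d\theta$ and integrating by parts (no boundary terms on the torus), under the hypothesis $\langle h\rangle_G = 0$ we obtain
\begin{equation*}
-\int d\theta\, G^{-1}\mc L_G(g)\, g = \int d\theta\, \Gamma G (\partial_\theta h)^2 + \int d\theta\, \Gamma G\, \partial_\theta h \cdot F(Gh).
\end{equation*}
The first term is exactly the RHS of \eqref{eq:neg:linearized-op} with constant $1$, so it suffices to absorb the cross term into it when $\|\partial_\theta W\|_\infty$ is small enough. Since $F = \partial_\theta W$, we have $\|F\|_\infty \leq \|\partial_\theta W\|_\infty$, and the pointwise bound $|F(Gh)(\theta)| \leq \|F\|_\infty \int G|h|\, d\theta \leq \|F\|_\infty \sqrt{\int G h^2\, d\theta}$ follows from Cauchy--Schwarz with the probability measure $G\, d\theta$.

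The key input is a weighted Poincar\'e inequality on the torus: since $\langle h\rangle_G = 0$,
\begin{equation*}
\int G h^2\, d\theta \leq C_P \int G \Gamma (\partial_\theta h)^2\, d\theta,
\end{equation*}
where $C_P$ depends only on the uniform upper and lower bounds on $G$ and $\Gamma$. The crucial point is that Lemma \ref{lem:C1-the} provides bounds on $G$ that depend only on $\|\partial_\theta W\|_\infty$ (and on $U$, $\Gamma$), so $C_P$ can be taken uniform as long as $\|\partial_\theta W\|_\infty$ stays in a bounded range. Combining Cauchy--Schwarz in $\theta$ on the cross term with the pointwise bound on $F(Gh)$ and the Poincar\'e inequality yields
\begin{equation*}
\left| \int d\theta\, \Gamma G\, \partial_\theta h \cdot F(Gh) \right| \leq C'\, \|\partial_\theta W\|_\infty\, \int d\theta\, \Gamma G (\partial_\theta h)^2
\end{equation*}
for a constant $C'$ depending on $U$, $\Gamma$ and on the a priori bound on $\|\partial_\theta W\|_\infty$. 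Choosing $K$ so that $C'K < 1$ gives \eqref{eq:neg:linearized-op} with $\kappa = 1 - C'K > 0$.

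For the kernel statement \eqref{eq:consequencelemme}: if $\mc L_G(g) = 0$, decompose $g = cG + \tilde g$ with $c = \int g\, d\theta$ and $\int \tilde g\, d\theta = 0$. Using $\mc L_G(G) = 0$ from \eqref{eq:LVV0} we get $\mc L_G(\tilde g) = 0$, and the coercivity bound forces $\partial_\theta(\tilde g/G) \equiv 0$, hence $\tilde g \propto G$, which combined with $\int \tilde g\, d\theta = 0$ gives $\tilde g = 0$. Thus $\mathrm{Ker}(\mc L_G) = \mathrm{Span}(G)$. For $\mc L_G^\dagger$, $\mc L_G$ is a compact perturbation of the symmetric uniformly elliptic operator $g \mapsto \partial_\theta(\Gamma G\, \partial_\theta(g/G))$ on the torus, hence Fredholm of index zero on the appropriate Sobolev spaces, so $\dim \mathrm{Ker}(\mc L_G^\dagger) = \dim \mathrm{Ker}(\mc L_G) = 1$, and \eqref{eq:LdaggerVV0} identifies this one-dimensional space as $\mathrm{Span}(\mathbf{1})$.

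The main obstacle is verifying that the Poincar\'e constant and the bounds on $G$ can be controlled uniformly as $W$ is perturbed; this is where Lemma \ref{lem:C1-the} is used in an essential way. The rest is a quadratic-form computation that is entirely routine once the substitution $h = g/G$ is made.
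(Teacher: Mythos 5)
Your proof is correct, and the core coercivity argument is essentially the paper's: the paper writes $G=e^{-H}$ and works with $e^{H}g=g/G$, which is exactly your substitution $h=g/G$, then uses the same Cauchy--Schwarz splitting of the cross term $\int \Gamma G\,\partial_\theta h\, F(Gh)\,d\theta$, a Poincar\'e inequality, and the bounds on $G$ from Lemma \ref{lem:C1-the} to absorb it when $\|F\|_\infty=\|\partial_\theta W\|_\infty$ is small (your $G$-weighted formulation of the Poincar\'e step, using $\langle h\rangle_G=0$, is in fact slightly cleaner than the paper's, which applies it to $e^Hg$). The kernel of $\mc L_G$ is handled identically. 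The one genuine divergence is ${\rm{Ker}}(\mc L_G^\dagger)$: you invoke a Fredholm index-zero argument (compact perturbation of the divergence-form elliptic operator $g\mapsto\partial_\theta(\Gamma G\,\partial_\theta(g/G))$), which is valid but imports more machinery --- in particular you would need elliptic regularity to identify the $L^2$-cokernel with classical solutions of the formal adjoint equation, and a word on why the nonlocal term $\partial_\theta\bigl(\Gamma G[F(g)-\langle g/G\rangle_G F(G)]\bigr)$ is compact. The paper instead stays entirely elementary: for $\varphi\in{\rm{Ker}}(\mc L_G^\dagger)$ it sets $\widehat\varphi=\varphi-\langle\varphi\rangle_G$, notes $\int d\theta\, G\widehat\varphi=0$, and applies the already-proved estimate \eqref{eq:neg:linearized-op} to $g=G\widehat\varphi$ through the duality identity $\int d\theta\, G\,\mc L_G^\dagger(\widehat\varphi)\,\widehat\varphi=\int d\theta\, G^{-1}\,\mc L_G(G\widehat\varphi)\,(G\widehat\varphi)$, forcing $\widehat\varphi$ to be constant and hence zero. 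The paper's route buys a self-contained argument (and a coercivity statement for the adjoint that feeds the range identities in \eqref{eq:kerLvdagger}); yours buys generality, since index-zero would give equality of kernel dimensions even without coercivity for the adjoint form.
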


\begin{proof}
We first prove \eqref{eq:neg:linearized-op}. Recall the potential $H$ introduced in \eqref{eq:equationforV2} satisfying $G=e^{-H}$. By \eqref{LO}, for any smooth function $g$ such that $\int d\theta g (\theta)=0$,  we have that
\begin{equation}
\begin{split}
{\mathcal L}_G (g)&= \partial_\theta \left( \Gamma e^{-H} \partial_\theta (e^H g)\right)  +  \partial_\theta \left( \Gamma G F(g)\right). 
\end{split}
\end{equation}
Multiplying this expression by $e^H g$, integrating in $\theta$ and performing an integration by parts we get
\begin{equation}
\label{eq:funaki1}
-\int d\theta \, e^H\,  {\mathcal L}_G (g)\, g = {\bb D} (g)  \; +\; \int d\theta \, \Gamma e^{-H} F(g) \partial_\theta (e^H g)
\end{equation}
where
\begin{equation*}
 {\bb D} (g):=\int d\theta \, e^{-H}\, \Gamma\,  \big[ \partial_\theta (e^H g)\big]^2 \ge 0.
\end{equation*}
By Cauchy-Schwarz inequality the second term on the right hand side of \eqref{eq:funaki1} can be bounded as
\begin{equation*}
\begin{split}
\left\vert \int d\theta \, \Gamma e^{-H} F(g) \partial_\theta (e^H g) \right\vert &\le \sqrt{{\bb D}(g)}\;  \sqrt{ \int d\theta \, \Gamma e^{-H} F^2 (g)}
\end{split}
\end{equation*}
and the goal is thus now to prove that 
\begin{equation}
\label{eq:funaki2}
\int d\theta \, \Gamma e^{-H} F^2 (g) \le \kappa {\bb D} (g) 
\end{equation}
for a constant $\kappa <1$ independent of $g$. By Cauchy-Schwarz inequality we have that
\begin{equation*}
\begin{split}
\| F(g)\|_{\infty}^2 &\le \| F\|_{\infty}^2 \left( \int d\theta  |g(\theta)| \right)^2\\
& \le  \| F\|_{\infty}^2 \, \left( \int d\theta \, e^{-2 H (\theta)} \right) \, \left( \int d\theta  |e^H g(\theta)|^2\right)\\
& \le  2\pi \| F\|_{\infty}^2 \| e^{-H}\|_{\infty}^2 \, \left( \int d\theta  |e^H g(\theta)|^2\right).
\end{split} 
\end{equation*}
By Poincar\'e inequality we have that
\begin{equation*}
\int d\theta  |e^H g(\theta)|^2 \le \int d\theta \, \big[ \partial_\theta (e^H g)\big]^2 \le \| 1/\Gamma\|_{\infty} \| e^H \|_{\infty} \, {\bb D} (g). 
\end{equation*}
Recalling that $G= e^{-H}$ we get that \eqref{eq:funaki2} is satisfied with 
\begin{equation*}
\kappa := 2\pi  \| F\|_{\infty}^2 \, \| \Gamma\|_\infty \| 1/\Gamma\|_{\infty} \|1/G \|_{\infty} \, \|G\|_{\infty}^2
\end{equation*}
Thanks to Lemma \ref{lem:C1-the} we see that if $ \| F\|_{\infty}$ is sufficiently small, $\kappa<1$ and this concludes the proof of the main result of the proposition.\\

To deduce \eqref{eq:consequencelemme}, let $g \in {\rm {Ker}}({\mc L}_G)$ so that ${\mc L}_G (g)=0$. We consider $h:= g- c G$ with $c=\int d\theta \, g$ so that $\int d\theta\, h =0$. Since ${\mc L}_G (G) =0$, we have also ${\mc L}_G (h)=0$. Then, since we have 
$$0 =\int d\theta \, G^{-1} \, {\mc L}_G (h) \, h   \le\, -\kappa \int d\theta \, G\, \Gamma\,  \big[ \partial_\theta (h/G)\big]^2$$
we deduce that $h/G$ is constant and since its integral in $\theta$ of $G$ is $1$ while the integral of $h$ in $\theta$ is $0$, we deduce that $h=0$, i.e. $g=c G \in {\rm{Span}} (G)$. Similarly if $\varphi \in {\rm{Ker}} ({\mc L}_G^\dagger)$, we start to define ${\widehat \varphi} = \varphi-c$ where the constant $c$ is such that $\int d\theta  G \widehat \varphi =0$, i.e. $c=\langle \varphi \rangle_G$. Since ${\mc L}_G^\dagger ({\bf 1})=0$, we have ${\mc L}_G^\dagger (\widehat \varphi)=0$. We use \eqref{eq:neg:linearized-op} to write
\begin{equation*}
0= \int d\theta\, G\,  {\mc L}_G^\dagger ({\widehat \varphi}) \; {\widehat \varphi} =  \int d\theta\, G^{-1}\,  {\mc L}_G (G{\widehat \varphi}) \; (G {\widehat \varphi}) \le - \kappa \int d\theta \, G \Gamma\,  [\partial_\theta {\widehat \varphi}]^2.
\end{equation*}
It follows that $\widehat \varphi$ is constant and since it is of mean zero, it is zero. Hence $\varphi$ is constant, i.e. $\varphi \in {\rm{Span}} ({\bf 1})$.

\end{proof}

\section{Chapman-Enskog expansion in the homogenized limit $\ve \to 0$}


We define $\Pi_G$ the projection on the vector space of local equilibria given for any function $g$ by 
\[
[\Pi_G( g )] (q, \theta) := \left(\int_{-\pi}^{\pi} g (q, \theta)  d\theta\right) \, G (\theta).
\]

\subsection{Chapman-Enskog expansion of the kinetic equation}
\label{app:chap}

We now look at the density in the long time scale $t\ve^{-1}$:
\begin{equation*}
{\tilde f}^\ve (q, \theta, t) = {f}^\ve (q, \theta, t \ve^{-1})
\end{equation*}
and we then send $\ve$ to $0$. By \eqref{eq:det-fluct-eq} we have that
\begin{equation}
\label{eq:f200}
\partial_t {\tilde f}^\ve + \mc T ( {\tilde f}^\ve)=\ve^{-1} \mc D_{{\tilde f}^\ve} ({\tilde f}^\ve). 
\end{equation}
Equation \eqref{eq:f200} will be the basis of the following expansion. 

Let $\tilde f_0^\ve$ be a local equilibrium defined by ${\Pi_G} (\tilde f^\ve) = \tilde f_0^\ve$, i.e.
\begin{equation*}
{\tilde f}_0^\ve (q, \theta, t) = {\tilde \rho}_0^\ve (q, t) G(\theta), \quad \tilde \rho_0^\ve:= \Pi (\tilde f_0^\ve),
\end{equation*}
and let us define $g_1^\ve$, assumed to be of order $1$ as $\ve \to 0$, by:
$$\tilde f^\ve = \tilde f_0^\ve+ \ve g_1^\ve.$$
In other words, the hydrodynamic behavior of $\tilde f^\ve$ is entirely captured by $\tilde f_0^\ve$. Observe that ${\Pi_G} (g_1^\ve)=0$ by construction. Inserting this expansion into \eqref{eq:f200}, we obtain:
\begin{equation}
[\partial_t + \mc T]\, (\tilde f_0^\ve) = \mc L_{{\tilde f}_0^\ve} (g^\ve_1) +O(\ve).
\label{eq:ep0}
\end{equation}
Notice that $\mc L_{\tilde f_0^\ve}={\mc L}_G$ defined in \eqref{LO} because $\tilde f_0^\ve$ is a local equilibrium. Applying ${\Pi_G}$ to \eqref{eq:ep0} yields 
\begin{equation}
\label{eq:jb007}
\Big[ {\Pi_G}\,  (\partial_t + \mc T) \Big] \, (\tilde f_0^\ve) =O(\ve), 
\end{equation}
because ${\Pi_G} \, \mc L_G =0$ and this implies
\begin{equation}
\begin{split}
&\partial_t \tilde \rho^\ve_0 + \langle V   \rangle_G \cdot  \nabla  \tilde \rho^\ve_0 = O(\ve)
\end{split}
\end{equation}
where we recall that $\langle \cdot \rangle_G$ denotes the expectation w.r.t. $G$. The last equation is the hydrodynamical equation at leading order when $\ve \to 0$.\\

Our goal is now to compute the  $O(\ve)$ correction term. Observe that the equation:
\[
\mc L_G (\psi) = \big[\partial_t + \mc T \big] ( \tilde f_0^\ve)
\]
in general has no solution for $\psi$, because ${\Pi_G}$ applied on the right hand side does not exactly vanish while ${\Pi_G} \, \mc L_G =0$. However, using \eqref{eq:jb007} we can as well rewrite \eqref{eq:ep0} as
\[
\mc L_{G} (g_1^\ve) = \big[ {\rm Id}-{\Pi_G} \big] \big[\partial_t + \mc T\big] \, ( \tilde f_0^\ve) +O(\ve).
\]
Removing the $O(\ve)$, the equation
\[
\mc L_{G} (\psi) = \big[ {\rm Id}-{\Pi_G} \big]  \big[ \partial_t + \mc T\big] (\tilde f_0^\ve)
\]
together with the condition that ${\Pi_V} (\psi)=0$, has a unique solution denoted by $\tilde f_1^\ve$ thanks to assumption \eqref{eq:kerLvdagger}. We have
\begin{align*}
&\left( \big[ {\rm Id}-{\Pi_V} \big]\big[\partial_t + \mc T\big] \, (\tilde f_0^\ve)\right)  \,  (q, \theta,t) = {\overline V} (\theta) \cdot \nabla \tilde \rho^\ve_0 (q, t) .
\end{align*}
Let  $\omega$ be the vector field solution to
\begin{equation}
\label{eq:omegaomegadeuxfois}
{\mc L}_G (V \omega ) =- G {\overline V},
\end{equation}
such that $\langle \omega \rangle_V =0$ (this is always possible since $\omega +C$ is also a solution for any constant vector field  $C$).  The existence and uniqueness of $\omega$ is a consequence of \eqref{eq:kerLvdagger} (see also \eqref{eq:varphidef} where this vector field is introduced to prove the $\Gamma$-limsup). Therefore we have
\begin{equation*}
\tilde f_1^\ve (q, \theta, t) = - \nabla\tilde \rho^\ve_0 (q,t)  \cdot (G\omega ) (\theta).
\end{equation*}
We then rewrite $g_1^\ve = \tilde f_1^\ve + \ve g_2^\ve$ and so defined $g_2^\ve$ will be of order $1$. We have 
$$\tilde f^\ve = \tilde f_0^\ve + \ve \tilde f_1^\ve + \ve^2 g_2^\ve.$$
Plugging this in \eqref{eq:f200} we get
\begin{equation*}
\begin{split}
& \big[\partial_t + \mc T\big] (\tilde f_0^\ve) + \ve \, \big[\partial_t + \mc T\big] (\tilde f_1^\ve) + \ve^2\,  \big[ \partial_t + \mc T \big] (g_2^\ve) =  \ve^{-1} \mc D_{\tilde f^\ve} ({\tilde f}^\ve).
 \end{split}
\end{equation*}
We apply ${\Pi_G}$ on both sides and observe that ${\Pi_G} {\mc D}_{\tilde f^\ve} ({\tilde f}^\ve) =0$, $[{\Pi_G} \, \partial_t ](\tilde f_1^\ve)=[\partial_t \,  {\Pi_G}] (\tilde f_1^\ve) =0$ since ${\Pi_G} (\tilde f_1^\ve) =0$. It follows that 
\begin{equation}
\label{eq:ep1}
 {\Pi_G} [\partial_t + \mc T]\,  (\tilde f_0^\ve) + \ve \, {\Pi_G} \mc T \, (\tilde f_1^\ve)  =O(\ve^2).
\end{equation}
Observe now that $\tilde f_1^\ve$ has the same expression as $f_1^0$ in \eqref{eq:f10-gr} (by changing there $\rho$ by ${\tilde \rho}_0^\ve$). Therefore by using the same computations as in \eqref{eq:yash} and \eqref{eq:non-adjoint} 
\begin{equation*}
{\Pi_G} \mc T\,  (\tilde f_1^\ve) = -\ve \nabla \cdot {\bf D} \nabla  \tilde \rho_0^\ve.
\end{equation*}
Then we obtain the following approximated diffusion equation for the density  
\begin{equation*}
\partial_t \tilde \rho^\ve_0 + \langle V \rangle_G \cdot \nabla \tilde \rho^\ve_0 - \ve \nabla \cdot {\bf D} \nabla  \tilde \rho_0^\ve \; = O (\ve^2)
\end{equation*}
where the matrix $\bf D$ is given by \eqref{eq:defD}.

\subsection{Formal derivation of the fluctuating kinetic equation}
\label{app:kin+fluc}

Since we are interested in the large fluctuations around the limiting typical behavior described in Section \ref{sec:kl}, we have to take in account the first order corrections (in $N$), i.e. to remember that we neglected the small noise term in \eqref{eq:fluct1}
\begin{equation}
\label{eq:intermezzo}
\cfrac{\sqrt 2}{N} \, \sum_{i=1}^N \Gamma (\theta_i) \partial_\theta \varphi (q_i, \theta_i) \, {\dot \eta}_i (t)=\sqrt{\cfrac{2}{N}} \, \cfrac{1}{\sqrt N} \, \sum_{i=1}^N \Gamma (\theta_i) \partial_\theta \varphi (q_i, \theta_i) \, {\dot \eta}_i (t)
\end{equation}
which, in the large $N$ limit and then small $R$ limit, may be approximated by 
\begin{equation*}
\sqrt{\frac{2}{N}}\partial_\theta \; \Big(\sqrt{ \Gamma f^\ve}\;  \eta \Big)
\end{equation*}
where $\eta:=\eta (q,\theta,t)$ is a $(q,\theta,t)$-Gaussian white noise. Observe that this is a non-trivial assumption since first the previous term is mathematically difficult to define and secondly because this results from the belief that the correlations in the sum \eqref{eq:intermezzo} may be neglected. Therefore, in order to take into account fluctuations, we have to replace \eqref{eq:det-fluct-eq} by the fluctuating kinetic equation 
\begin{equation}
\begin{split}
\label{eq:f2}
\partial_t f^\ve &=\partial_\theta \left( \Gamma \left[ \partial_\theta U  + \cfrac{ F (f^\ve) }{\rho^\ve}\right]   f^\ve  +  \Gamma \, \partial_\theta f^\ve \right) -\varepsilon V(\theta) \cdot  \nabla f^\ve \\
&\quad +\sqrt{\frac{2}{N}}\, \partial_\theta \big(\sqrt{\Gamma f^\ve} \; \eta \big).
\end{split}
\end{equation}

We now send $\ve$ to $0$ and look at the fluctuating density in the long time scale $t\ve^{-1}$:
\begin{equation*}
{\tilde f}^\ve (q, \theta, t) = {f}^\ve (q, \theta, t \ve^{-1}). 
\end{equation*}
We have that
\begin{equation}
\begin{split}
\label{eq:f2}
\partial_t {\tilde f}^\ve &=\ve^{-1}\partial_\theta \left( \Gamma \left[ \partial_\theta U  + \cfrac{ F (f^\ve) }{\rho^\ve}\right]   f^\ve  +  \Gamma \, \partial_\theta f^\ve \right) - V(\theta) \cdot \nabla {\tilde f}^\ve \\
&\quad +\sqrt{\frac{2 }{N \ve} }\, \partial_\theta \big(\sqrt{\Gamma {\tilde f}^\ve} \; \eta \big).
\end{split}
\end{equation}
Performing a change of frame and accelerating again time by $\varepsilon^{-1}$ like in \eqref{eq:changetsfeps}, we obtain \eqref{eq:fluct_kin_start045}.


\subsection{Chapman-Enskog expansion of the fluctuating kinetic equation}
\label{app:chap+fluc}

We would like to proceed as in the previous section, with a Chapman-Enskog expansion. There are now two small parameters, $\ve$ and $N^{-1}$, and we will have to choose an appropriate scaling.
We introduce explicitly $N$ in the notation.
The local equilibrium $\tilde f_0^{\ve,N}$ is defined by ${\Pi_V} (\tilde f^{\ve,N}) = \tilde f_0^{\ve,N}$ (hence ${\mc L}_{\tilde f_0^{\ve,N}} ={\mc L}_G$) and the correction $g_1^{\ve,N}$ by:
$$\tilde f^{\ve,N} = \tilde f_0^{\ve,N}+ \ve g_1^{\ve,N}.$$
It is not clear a priori that $g_1^{\ve,N}$ can be taken of order $1$; we assume however that  $\ve g_1^{\ve,N} =o(1)$. Inserting this into \eqref{eq:fluct_kin_start}, we obtain
\begin{equation}
\label{eq:ordre0}
[\partial_t +\mc T] \, (\tilde f_0^{\ve,N}) =\mc L_{G} (g_1^{\ve,N}) +(\ve N)^{-1/2} \mc N \Big(\sqrt{\Gamma {\tilde f}^{\ve,N}}\Big) +o(1) 
\end{equation}
Notice we have not expanded the noise term. Applying ${\Pi_G}$ to the above equation yields
\[
{\Pi_G} [\partial_t +\mc T] \, (\tilde f_0^{\ve,N}) = o(1)
\]
which provides the hydrodynamic equation at leading order; it is not modified by the noise.
We now rewrite \eqref{eq:ordre0} as
\begin{equation*}
\begin{split}
\mc L_{V} (g_1^{\ve,N}) &= \big( {\rm Id}-{\Pi_G} \big) (\partial_t + \mc T)\, ( \tilde f_0^{\ve,N} ) + (\ve N)^{-1/2} \mc N \Big(\sqrt{\Gamma {\tilde f_0}^{\ve,N}}\Big)\\
& +o(1) + (\ve N)^{-1/2} O(\ve g_1^{\ve,N})
\end{split}
\end{equation*}
where we have now expanded the noise: this creates a noisy term of order $\ve g_1^{N,\ve}$, denoted by $O (\ve g_1^{\ve,N})$. 
We call $\tilde f_1^{\ve,N}$ the unique solution of
\[
\mc L_{G} (\tilde f_1^{\ve,N}) = \big( {\rm Id}-{\Pi_G} \big) (\partial_t + \mc T)\, ( \tilde f_0^{\ve,N})  + (\ve N)^{-1/2}  \mc N \Big(\sqrt{\Gamma {\tilde f_0}^{\ve,N}} \Big), \quad {\Pi_G}(u)=0.
\]
Since
\begin{equation*}
\begin{split}
 &\Big[ \big( {\rm Id}-{\Pi_G} \big) (\partial_t + \mc T) (\tilde f_0^\ve) \Big] (q, \theta,t) + (\ve N)^{-1/2} \mc N\Big(\sqrt{\Gamma {\tilde f_0}^{\ve,N}}\Big)\\
 &=  \nabla {\tilde \rho}_0^\ve (\bq,t) \cdot {\bar V} (\theta) \,   G (\theta) +  (\ve N)^{-1/2}\sqrt{2 \tilde \rho_0^\ve (q, t)}  \,  \partial_\theta \left(\sqrt{(\Gamma G) (\theta)} \eta (q,\theta,t) \right),
\end{split}
\end{equation*}
we get that (recall \eqref{eq:omegaomegadeuxfois})
\begin{equation}
\label{eq:f1epsNio}
\begin{split}
\tilde f_1^{\ve,N}&= -   \nabla {\tilde \rho}_0^\ve (\bq,t) \cdot (V\omega)  (\theta)\; +\;  (\ve N)^{-1/2} \sqrt{2 \tilde \rho_0^\ve (q, t)} \; \nu (\theta, q,t),
\end{split}
\end{equation}
with 
\begin{equation}
\label{eq:nu}
{\mc L}_G (\nu) (q, \theta, t) = \partial_\theta \left(\sqrt{(\Gamma G) (\theta)} \, \eta (q,\theta,t) \right).
\end{equation}
We can always choose $\nu$ such that $\EE (\nu) =0$ since ${\rm{Ker}} ({\mathcal L}_G)={\rm{Span}} (G)$.  Formally, the contribution to $\tilde f_1^{\ve,N}$ given by the first term in the right hand side is of order $1$, and the contribution of the noise, second term in the right hand side is of order $(\ve N)^{-1/2}$.
We rewrite $g_1^{\ve,N} =\tilde f_1^{\ve,N} +\ve g_2^{\ve,N}$, where we want that $\ve g_2^{\ve,N} =o \Big(   \tilde f_1^{\ve,N}$ \Big). The full expansion is then
\begin{equation*}
    \begin{split}
        &\tilde f^{\ve,N}= \tilde f_0^{\ve,N} +\ve \tilde f_1^{\ve,N} +\ve^2 g_2^{\ve,N}.
    \end{split}
\end{equation*}
At this point we can make sure that the expansion makes sense, that is 
$\ve \tilde f_1^{\ve,N}=o(1)$. Formally, this requires only that $N^{-1/2} =o(1)$ i.e. $N$ large. However, if we want that $\ve \tilde f_1^{\ve,N}$ is actually $O(\ve)$, we have to require that
$(\ve N)^{-1} =O(1)$.
We plug again the expansion for $\tilde f^{\ve,N}$ into \eqref{eq:fluct_kin_start}:
\begin{equation*}
\begin{split}
&[\partial_t +\mc T] (\tilde f_0^{\ve,N}) +\ve [\partial_t +\mc T] ( \tilde f_1^{\ve,N}) +O(\ve ^2 g_2^{\ve,N})\\
&\quad \quad = \ve^{-1} \mc D_{\tilde f^{\ve,N}} (\tilde f^{\ve,N}) +(\ve N)^{-1/2} \mc N(\sqrt{\Gamma \tilde{f}^{\ve,N}}), 
\end{split}
\end{equation*}
and we apply ${\Pi_G}$. The right hand side vanishes, and we are left with
\begin{equation}
{\Pi_G} [\partial_t +\mc T] \,  (\tilde f_0^{\ve,N}) +\ve {\Pi_G} \mc T \,  (\tilde f_1^{\ve,N})  = O(\ve ^2 g_2^{\ve,N}).
\end{equation}
We assume that the right hand side is indeed much smaller than the second term in the left hand side in the scaling limit. Recall \eqref{eq:f1epsNio}. We observe now that by using the same computations as in \eqref{eq:yash} and \eqref{eq:non-adjoint} 
\begin{equation*}
[{\Pi_G} \mc T]\,  (\nabla {\tilde \rho}_0^\ve \cdot (V\omega)) = - G\, \nabla \cdot {\bf D} \nabla  \tilde \rho_0^\ve.
\end{equation*}
with $\BF D$ defined by \eqref{eq:defD}, and we claim that 
\begin{equation*}
\begin{split}
[{\Pi_G} \mc T] \,  \Big(\sqrt{\tilde \rho_0^\ve} \; \nu\Big) (q, \theta,t) =G(\theta)  \nabla\cdot\left(\sqrt{\tilde\rho_0^\ve (q,t) \BF{\sigma}}\,  \zeta (q,t) \right)
\end{split}
\end{equation*}
with $\zeta:=\zeta(q,t)$ a standard $2$-space dimensional Gaussian white noise and $\BF \sigma$ defined by  \eqref{eq:defsigma}. To prove this write
\begin{equation*}
\begin{split}
&\left[[{\Pi_G} \mc T] \,  \Big(\sqrt{\tilde \rho_0^\ve} \; \nu\Big) \right]\, (q, \theta, t) = G(\theta) \, Z^\ve (q,t),\\
&\quad Z^\ve (q,t) := \int d\theta' \, {\overline V} (\theta') \cdot \nabla \Big(\sqrt{\tilde \rho_0^\ve} \; \nu \Big) (q, \theta', t) 
\end{split}
\end{equation*}
where $Z^\ve (q,t)$ is a centered random variable. Recalling  \eqref{eq:nu} and \eqref{eq:psi-def} we have that
\begin{equation*}
\begin{split}
Z^\ve (q,t) &= - \int d\theta \, ({\mathcal L}_G^\dagger \psi)  (\theta) \cdot \nabla \Big(\sqrt{\tilde \rho_0^\ve} \; \nu \Big) (q, \theta, t) \\
&= - \int d\theta \, \psi (\theta) \cdot \nabla\Big(\sqrt{\tilde \rho_0^\ve} \;  {\mathcal L}_G (\nu) \Big) (q, \theta, t)\\
&=- \int d\theta \, \psi (\theta) \cdot \nabla\Big(\sqrt{\tilde \rho_0^\ve} \;  \partial_\theta (\sqrt{\Gamma G} \eta )  \Big) (q, \theta, t)\\
&= \int d\theta \, {\sqrt{\Gamma G}} (\theta) (\partial_\theta\psi) (\theta) \cdot \nabla\Big(\sqrt{\tilde \rho_0^\ve} \; \eta  \Big) (q, \theta, t)\\
&= \nabla \cdot \left[  \int d\theta \, {\sqrt{\Gamma G}} (\theta) \Big(\sqrt{\tilde \rho_0^\ve} \; \eta  \Big) (q, \theta, t)  \; (\partial_\theta\psi) (\theta) \right]:=\nabla \cdot Y^\ve (q, t) 
\end{split}
\end{equation*}
with $Y^\ve$ a centered Gaussian field whose covariance satisfies
\begin{equation*}
\begin{split}
\EE (Y^\ve (q,t) Y^{\ve} (q', t') ) &= \delta (q-q') \delta (t-t') \, {\tilde \rho}_0^\ve (q, t) \, \langle \Gamma (\partial_\theta \psi) \cdot (\partial_\theta \psi) \rangle_G.
\end{split}
\end{equation*}

This provides the fluctuating hydrodynamic equation we are looking for. The final stochastic PDE for $\tilde \rho_0^\ve$ is given by (compare with \eqref{eq:Dean-eq-rho}):
\begin{equation}
\begin{split}
&\partial_t \tilde \rho^\ve_0 + \langle V \rangle_G \cdot \nabla \tilde \rho_0^\ve =  \varepsilon \nabla \cdot \mathbf{D} \nabla \tilde\rho_0^\ve +\sqrt{\frac{2\ve }{N}}\nabla\cdot\left(\sqrt{\tilde\rho_0^\ve \BF{\sigma}} \zeta \right) \; + o(1).
\label{eq:rhofinal2}
\end{split}
\end{equation}

\end{document}